\documentclass{article}

\usepackage{PRIMEarxiv}

\usepackage[utf8]{inputenc} 
\usepackage[T1]{fontenc}    
\usepackage{hyperref}       
\usepackage{url}            
\usepackage{booktabs}       
\usepackage{amsfonts}       
\usepackage{amsmath,amssymb}
\usepackage{mathtools}
\usepackage{nicefrac}       
\usepackage{microtype}      
\usepackage{lipsum}
\usepackage{fancyhdr}       
\usepackage{graphicx}       
\usepackage{amsthm}
\usepackage{times}
\usepackage{subfig}
\usepackage{xcolor}
\usepackage{array}
\usepackage{cite}
\usepackage{multirow}
\usepackage{adjustbox}
\usepackage[scr=dutchcal]{mathalfa}
\usepackage[font=small,labelfont=bf]{caption}
\usepackage{enumitem}
\usepackage{tikz}
\usetikzlibrary{arrows, arrows.meta, bending, matrix}

\DeclareMathOperator{\diag}{diag}

\newtheorem{theorem}{Theorem}
\newtheorem{lemma}{Lemma}
\newtheorem{proposition}{Proposition}
\newtheorem{corollary}{Corollary}

\newcommand{\e}{\mbox{e}}

\title{
Boundedness of solutions in feedback systems \\ with antithetic controllers
} 

\author{
  Moh Kamalul Wafi$^1$, Arthur C. B. de Oliveira$^1$, and Eduardo D. Sontag$^{1,2}$ \\
  $^1$Department of Electrical and Computer Engineering. \\
  $^2$Department of BioEngineering and affiliated with the Departments of Chemical Engineering and Mathematics. \\
  Northeastern University \\
  Boston, USA\\
  \texttt{\{wafi.m, a.castello, e.sontag\}@northeastern.edu}
}

\begin{document}
\maketitle

\begin{abstract}
Antithetic feedback controllers have become a key experimental and theoretical tool in synthetic biology.
Introduced by Khammash and collaborators about 10 years ago, they are employed in order to achieve the practical regulation of protein expression, including tracking and robust disturbance rejection.
In closed-loop, there are unique equilibria which, depending on parameter values, can be unstable.
It had been shown, however, that this instability is not arbitrary: any bounded trajectory that stays away from the equilibrium must converge to a periodic orbit.
This motivated a long-standing open question: is every trajectory bounded?
In other words, even if the equilibrium is unstable, can nonlinear effects prevent unbounded excursions in the state space?
This paper provides an affirmative answer, establishing the boundedness of all solutions.
Previous attempts to prove this fact using Lyapunov functions had no success.
Instead, this paper takes a completely different approach, specific to antithetic configurations, in which the key idea is to think of the controller as providing a ``persistently negative feedback'' which acts far away from the equilibrium in such a way so as to keep trajectories from diverging.
This new approach, although tailored to the antithetic controller, might be useful in other applications as well.
\end{abstract}

\keywords{boundedness of solutions \and nonlinear systems \and antithetic controller \and synthetic biology}

\section{Introduction}\label{sec:intro duction}
This paper studies the following nonlinear dynamical system, which represents the closed-loop interconnection of an \emph{antithetic integral controller} (described by the variables $z_1\in\mathbb{R}_+$ and $z_2\in\mathbb{R}_+$) with a general positive linear system (described by the state $x\in\mathbb{R}_+^n$ and output $y\in\mathbb{R}_+$):
\begin{equation}\label{eq:gen_dynamics}
    \begin{aligned}
    \dot z_1 &= \alpha_1-\alpha_2 z_1 z_2\\[.25em]
    \dot z_2 &= \alpha_3 y-\alpha_4 z_1 z_2
    \end{aligned}
    \qquad \textrm{and} \qquad
    \begin{aligned}
    \dot x &= Ax + bz_1,\\
    y &= c^{\top} x.
    \end{aligned}
\end{equation}
See Figure~\ref{fig:diagram_general}.
\begin{figure}
    \centering
    \includegraphics[width=0.5\linewidth]{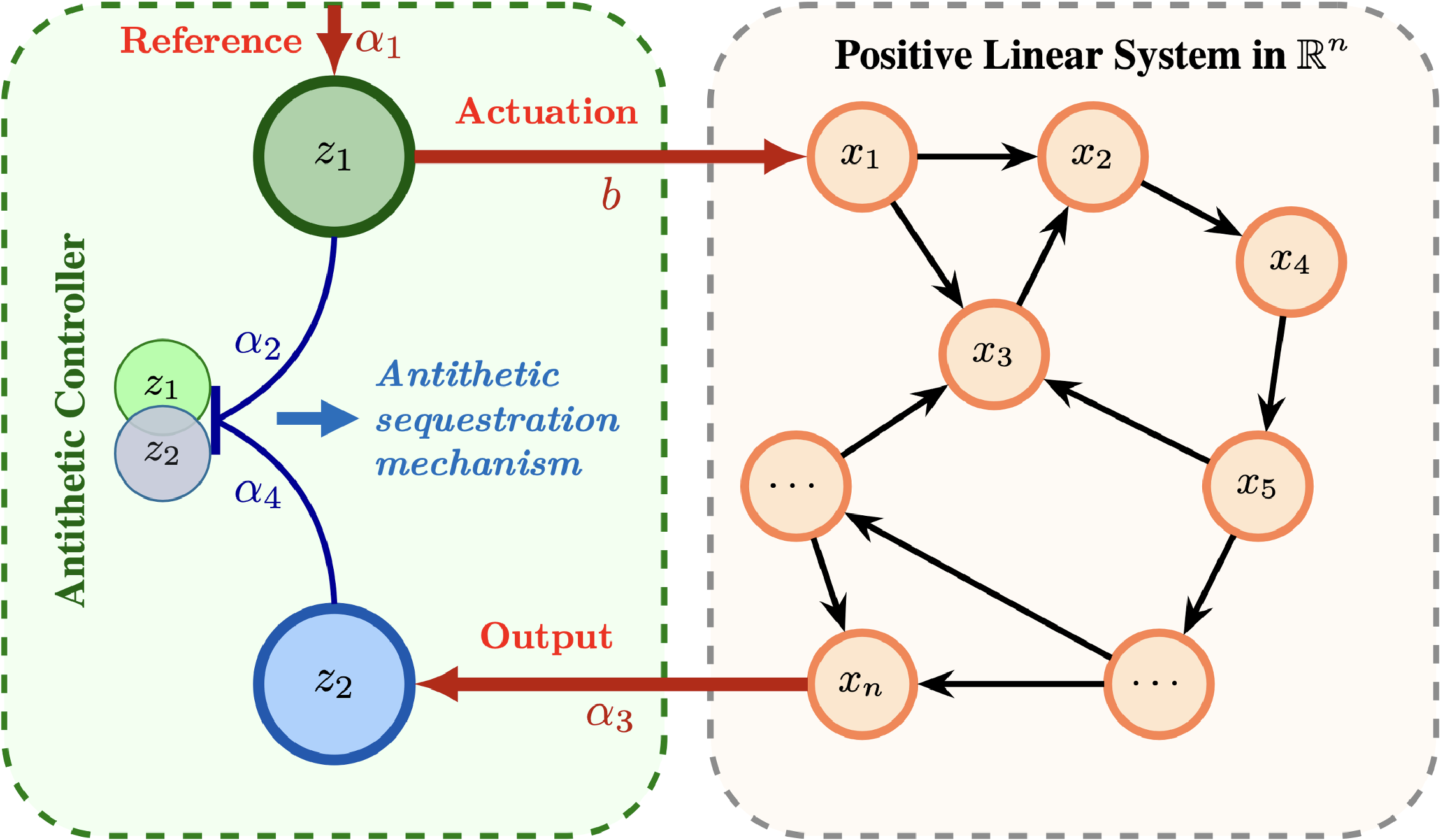}
    \caption{Closed-loop system~\eqref{eq:gen_dynamics}. The antithetic
    controller ($z_1$, $z_2$) on the left is interconnected with the positive
    linear plant on the right, where $A\in\mathbb{R}^{n\times n}$ is Hurwitz
    and Metzler, $b=[b_1,\dots,b_n]^\top,c=[c_1,\dots,c_n]^\top\in\mathbb{R}^n_+$, and the DC gain
    $G:=-c^\top A^{-1}b>0$. The signal $\alpha_1$ denotes the constant production
    rate of $z_1$. The red arrows indicate the main interconnection signals:
    $z_1$ drives the plant input with gain $b$, and the plant output
    $y = c^\top x$ feeds back to $z_2$ with gain $\alpha_3$. The vertical
    bar with rate $(\alpha_2,\alpha_4)$ represents the sequestration reaction between $z_1$
    and $z_2$, implementing integral action. Solid arrows inside the plant denote positive interactions between 
    state variables.}
    \label{fig:diagram_general}
\end{figure}
We assume that the constants $\alpha_i$, for $i=1,\dots,4$, are all positive, the matrix $A\in\mathbb{R}^{n\times n}$ 
is Hurwitz and Metzler, $b,c\in\mathbb{R}^n_+$, and the DC gain $G:=-c^\top A^{-1}b>0$ is positive. 
Our main result will show that every solution of this system is bounded.

Since its original formulation by Briat, Gupta, and Khammash \cite{AFC2016}, the antithetic controller has attracted sustained attention in both theoretical and experimental systems biology.
It is well known from linear systems theory, as well as from several classes of nonlinear systems, that integral feedback, or more generally the presence of an ``internal model'' of admissible exogenous signals, is essential for robust regulation. This principle also plays a central role in biological processes at both the cellular and organismal levels~\cite{rev_internal_model_2022}. Motivated by these considerations, there has been considerable recent effort devoted to 
implementing integral feedback mechanisms in synthetic biology (see~\cite{briat2025structuralstabilitypropertiesantithetic} and the references therein). A particularly influential approach, and the focus of this work, is based on sequestration or annihilation reactions between two molecular species ($z_1$ and $z_2$ in system~\eqref{eq:gen_dynamics}). This framework, introduced under the name antithetic control in~\cite{AFC2016}, implements integral action through simple biochemical interactions. Furthermore, it was shown in~\cite{Khammash2019} that such controllers achieve robust perfect adaptation 
while exhibiting, in a precise sense, minimal structural complexity.
Several research groups have reported experimental realizations of this architecture. An \emph{in vivo} implementation described in~\cite{Khammash2019} uses the $\sigma^W$--RsiW regulatory pair in \emph{Bacillus subtilis} as the antithetic species. The implementation~\cite{agarwal2019naturecom} constructed and analyzed an \emph{in vitro} synthetic biomolecular integral controller that regulates the protein production rate of an output gene via sequestration between 
\emph{E.\ coli} $\sigma_{28}$ and its corresponding anti-factor. Also closely related is a proposed ``quasi-integral feedback'' mechanism to achieve adaptation under varying ribosome demand, based on sRNA regulation~\cite{Huang2018}.

It is easy to show that the closed-loop system~\eqref{eq:gen_dynamics} admits a 
unique equilibrium $(z_1^*, z_2^*, x^*, y^*)$ in the nonnegative orthant, given by
\begin{equation*}
    z_1^* = \frac{\alpha_1\alpha_4}{\alpha_2\alpha_3 G}, \qquad
    z_2^* = \frac{\alpha_3 G}{\alpha_4}, \qquad
    x^* = -A^{-1}b\, z_1^*, \qquad
    y^* = \frac{\alpha_1\alpha_4}{\alpha_2\alpha_3},
\end{equation*}
for all positive $\alpha_1,\dots,\alpha_4$ and any admissible $A$, $b$, $c$ (i.e., $A$ Hurwitz and Metzler, $b,c\in\mathbb{R}^n_+$, and $G>0$).

There has been substantial work on the global dynamics of such systems, particularly in the special case, described in detail in the Supplemental Materials of~\cite{AFC2016}, in which the linear subsystem is a cascade of two one-dimensional systems. (The problem is already interesting even in the simplest case of static feedback~\cite{olsman2019antithetic}.) A key insight is that these systems (for arbitrary cascades of one-dimensional sytems, and even for a larger class of tridiagonal-Jacobian systems) can be analyzed using the theory of strongly $2$-cooperative systems; see~\cite{2019biorxiv_margaliot_sontag,katz2025instability,Margaliot-CDC}. This theory implies that every bounded trajectory satisfies a strong Poincar\'e--Bendixson property, in the sense of~\cite{Eyal_k_posi}. Consequently, every bounded trajectory whose omega-limit set does not contain the equilibrium must approach a periodic orbit. Such orbits do indeed occur: for appropriate choices of parameters and system matrices, the system can exhibit sustained oscillations, as illustrated by the four-dimensional example in Section~\ref{sec:numerics}. However, the question of whether \textit{all} trajectories are bounded, for arbitrary initial conditions, was left open. The present work resolves this issue by proving boundedness in full generality.

The key feature of the system is that the damping term $\alpha_2 z_1 z_2$ in $\dot z_1 = \alpha_1 - \alpha_2 z_1 z_2$ is not directly available as a feedback signal, but is generated through the $x$-system being controlled (the ``plant'') driven by $z_1$ itself. The central idea is that sufficiently large excursions of $z_1$ cannot persist indefinitely: if $z_1$ remains above a threshold $L > L_0$ for a sufficiently large time $T \ge T_0$, the plant output $y$ builds up sufficiently to drive $z_2$ large enough so that the product $z_1 z_2$ exceeds $\alpha_1/\alpha_2$, forcing $\dot{z}_1(T_0) < 0$. 
Further analysis then shows this domination persists for all $t \in [T_0, T]$, from which global boundedness of all states follows.
This \emph{input-to-output persistency} exists generically for the $x$-subsystem and is what allows the proof of boundedness of the integral controller output.

Notice in particular that the proof strategy does not rely on Lyapunov functions, instead working directly with differential inequalities and comparison principles, tracing lower bounds through the plant dynamics via the step-response function $g(t)$. The boundedness mechanism admits a natural interpretation 
in which forward propagation through the plant is eventually counteracted by the delayed negative feedback through $z_2$, offering clear insight into how the feedback delay influences the overall system behavior.

Understanding boundedness and stability properties of nonlinear feedback systems is a fundamental problem in control theory. In many applications, the feedback mechanism that regulates a system does not act instantaneously, 
but is mediated through intermediate dynamics that introduce delays and attenuation. As a result, classical Lyapunov-based approaches may be difficult to apply directly, especially when the feedback enters the system 
in a nonlinear or multiplicative form. We believe that the techniques introduced here may be helpful in the analysis of many such systems.

The organization of this paper is as follows.
In Section~\ref{sec:general}, we prove the main boundedness result, with a couple of routine technical lemmas deferred to an appendix.
In Section~\ref{sec:4d}, we specialize to the motivating four-dimensional case, with the purpose of giving more explicit estimates of the compact region that contains any given trajectory, and provide two numerical examples in Section~\ref{sec:numerics}.
Finally, in Section~\ref{sec:conclusion}, we include some additional discussion, including possible extensions of the current result for the interconnection of monotone nonlinear systems and the antithetic control structure.

\section{Main boundedness results}\label{sec:general}

We denote the set of real numbers, real vectors of dimension $n$, and 
real matrices of dimension $n\times m$ by $\mathbb{R}$, $\mathbb{R}^n$, 
and $\mathbb{R}^{n\times m}$. The orders $>,<,\geq$, and $\leq$ are 
interpreted element-wise when applied to vectors and matrices. We also define $\mathbb{R}_+^{n\times m}:=\{A\in\mathbb{R}^{n\times m}\mid A\ge 0\}$; 
the sets $\mathbb{R}_+^{n}$ and $\mathbb{R}_+:=[0,\infty)$ are defined similarly. A 
matrix is called \emph{Metzler} if its off-diagonal entries are nonnegative, and Hurwitz if its eigenvalues have strictly negative real parts.

The main contribution of this paper is the following result, which 
establishes boundedness of all solutions of~\eqref{eq:gen_dynamics}.

\begin{theorem}\label{thm:main}
    Every solution of system~\eqref{eq:gen_dynamics} starting from an initial 
    condition in $\mathcal{C} := \mathbb{R}^2_+ \times \mathbb{R}^n_+$ exists for all $t\ge 0$ and is bounded.
\end{theorem}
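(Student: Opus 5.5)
We first establish forward invariance of $\mathcal{C}$ and reduce global existence to boundedness. On the boundary $z_1=0$ we have $\dot z_1=\alpha_1>0$, and on $z_2=0$ we have $\dot z_2=\alpha_3 y\ge 0$. Because $A$ is Metzler and $b\ge 0$, the $x$-subsystem with input $z_1\ge0$ leaves $\mathbb{R}^n_+$ invariant. Hence solutions remain in $\mathcal{C}$ on their maximal interval of existence, and it suffices to show that no component can blow up on a finite or infinite horizon.

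The key reduction uses the combination $w:=\alpha_4 z_1-\alpha_2 z_2$, which cancels the sequestration term: $\dot w=\alpha_1\alpha_4-\alpha_2\alpha_3 c^\top x$. Also $\dot z_1\le \alpha_1$ and $\dot z_2\le \alpha_3 y$, so every component grows at most linearly or exponentially and no finite escape time can occur. Moreover $x(t)=e^{At}x(0)+\int_0^t e^{A(t-s)}b\,z_1(s)\,ds$ with $e^{At}\ge0$ Hurwitz, so $x$ (and $y$) is bounded by a constant times $\sup_{s\le t} z_1(s)$ plus a decaying term. Since $z_2 = (\alpha_4 z_1 - w)/\alpha_2$ and $w$ is controlled by $y$, everything reduces to showing $\sup_t z_1(t)<\infty$.

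To bound $z_1$ we argue by "persistent negative feedback". Let $g(t)=c^\top\int_0^t e^{As}b\,ds$ be the step response, which is nondecreasing (since $c^\top e^{As}b\ge0$) with $g(t)\to G>0$. Fix a large threshold $L$. Suppose $z_1$ crosses upward through $L$ at time $t_0$. Since $\dot z_1\le\alpha_1$, $z_1$ was at least $L/2$ on $[t_0-L/(2\alpha_1),t_0]$, provided $z_1(0)<L/2$. More usefully, consider any interval $[t_0,t_0+\tau]$ on which $z_1\ge L$. By positivity, $y(t_0+s)\ge L\,g(s)$. Choose $T_0$ with $g(T_0)\ge G/2$. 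Then for $s\ge T_0$ we get $y\ge LG/2$, so $\dot z_2\ge \alpha_3 LG/2-\alpha_4 z_1z_2$. The sign of $\dot z_1$ is determined by whether $z_2>\alpha_1/(\alpha_2 z_1)$. Since $z_1\ge L$, this threshold is at most $\alpha_1/(\alpha_2L)$, which is tiny. So once $z_2$ exceeds $\alpha_1/(\alpha_2 L)$, we have $\dot z_1<0$ as long as $z_1 z_2>\alpha_1/\alpha_2$. We show that $z_2$ is pushed above the threshold $\alpha_1/(\alpha_2 z_1)$ within time $T_0+O(1)$. Indeed, $\dot z_2>0$ whenever $z_1z_2<\alpha_3 LG/(2\alpha_4)$, and this level greatly exceeds $\alpha_1/\alpha_2$ for large $L$. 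Therefore the product $z_1z_2$, once above $\alpha_1/\alpha_2$, cannot fall back below it while $z_1\ge L$: at the boundary $z_1z_2=\alpha_1/\alpha_2$ we have $\frac{d}{dt}(z_1z_2)=z_2\dot z_1+z_1\dot z_2$ with $\dot z_1=0$ and $\dot z_2>0$. This forward invariance is the persistence step. It yields that during any excursion above $L$, $z_1$ increases by at most $\alpha_1(T_0+C)$ before it starts decreasing, and it keeps decreasing until it returns to $L$. Hence $z_1\le \max\{z_1(0),L\}+\alpha_1(T_0+C)$.

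The main obstacle is making the time $C$ needed for $z_2$ to reach $\alpha_1/(\alpha_2 z_1)$ uniform. The difficulty is that $z_1$ may grow during that window, so the threshold and the rate $\alpha_4 z_1z_2$ interact. We will handle this with a comparison argument. While $z_1z_2\le\alpha_1/\alpha_2$ we have $\dot z_2\ge\alpha_3LG/2-\alpha_4\alpha_1/\alpha_2\ge \alpha_3LG/4$, so $z_2$ grows linearly at rate $\propto L$. The needed level $\alpha_1/(\alpha_2L)$ is therefore reached within time $O(L^{-2})$, which is uniformly bounded. A secondary subtlety is the initial transient, in which $x(0)$ and $z_2(0)$ are arbitrary. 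This is harmless because positivity gives $y\ge L g(s)$ regardless of $x(0)\ge0$, and a large $z_2(0)$ only helps. Finally, bounded $z_1$ gives bounded $x,y$ through the convolution bound. Then $\dot z_2\le\alpha_3\bar y-\alpha_4 z_1z_2$ together with $z_1\ge \min\{z_1(0),\cdot\}$ bounds $z_2$. For the lower bound on $z_1$ we argue as follows: when $z_1$ is small, $\dot z_1\ge\alpha_1-\alpha_2 z_1 z_2$, and $\dot z_2\le\alpha_3\bar y-\alpha_4z_1z_2\le \alpha_3\bar y$. Alternatively, we use $w$ directly: $z_2=(\alpha_4z_1-w)/\alpha_2$ with $\dot w\ge\alpha_1\alpha_4-\alpha_2\alpha_3\bar y$. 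This gives linear growth bounds, and integrating $\dot z_1\ge\alpha_1-\alpha_2z_1z_2$ shows that $z_2$ cannot grow unboundedly while $z_1$ stays small, since $z_1$ then rises at rate about $\alpha_1$ and makes the product large. This concludes boundedness of all states.
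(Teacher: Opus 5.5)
Your treatment of $z_1$ is sound, and in one respect it is cleaner than the paper's Theorem~\ref{thm:GenLin-HasProperty}. The paper bounds the damping rate $\alpha_4 z_1$ by the ceiling $U(L,T_0)$ and solves a fixed-point equation for $\tau(\ell)$. You instead observe that while $p=z_1z_2\le\theta:=\alpha_1/\alpha_2$, the sequestration term in $\dot z_2$ is at most $\alpha_4\theta$. So once $y\ge GL/2$, $z_2$ grows at rate at least $\alpha_3GL/4$, and $p$ exceeds $\theta$ within an extra time $4\theta/(\alpha_3GL^2)$. The invariance of $\{p>\theta\}$ is then exactly Lemma~\ref{lem:p_above_theta}.

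The genuine gap is the bound on $z_2$, which you assert twice but never establish. First, the reduction ``$w=\alpha_4z_1-\alpha_2z_2$ is controlled by $y$'' fails. Since $\dot w=\alpha_1\alpha_4-\alpha_2\alpha_3y$, a bound $y\le\bar y$ only yields $w(t)\ge w(0)-(\alpha_2\alpha_3\bar y-\alpha_1\alpha_4)t$, i.e.\ $z_2$ growing at most linearly. Nothing prevents $\bar y$ from exceeding $y^*=\alpha_1\alpha_4/(\alpha_2\alpha_3)$. Second, your closing argument needs a positive lower bound on $z_1$. The only available one, $z_1\ge\min\{z_1(0),\alpha_1/(\alpha_2\sup_t z_2)\}$, presupposes the bound on $z_2$. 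Moreover, the heuristic that $z_1$ ``rises at rate about $\alpha_1$ and makes the product large'' is backwards. When $z_2$ is large, $\dot z_1=\alpha_1-\alpha_2z_1z_2$ drives $z_1$ down to about $\alpha_1/(\alpha_2z_2)$, so $p$ settles near $\theta$, not at large values. Then $\dot z_2\approx\alpha_3y-\alpha_4\theta$, whose sign is not decided by $y\le\bar y$.

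What is missing is the coupling with the plant: $z_2$ stops growing only because the small $z_1$ eventually makes $y$ drop below $y^*$. The paper captures this with $W:=z_2+\nu^\top x$, $\nu^\top:=-\alpha_3c^\top A^{-1}\ge0$, chosen to cancel the $y$-term (whereas your $w$ cancels the sequestration term). This gives $\dot W=\alpha_3Gz_1-\alpha_4z_1z_2=\alpha_4z_1(K^*-z_2)$ with $K^*=\alpha_3G/\alpha_4$, which is $\le0$ whenever $z_2\ge K^*$, however small $z_1$ is. Since $0\le\nu^\top x\le\|\nu\|M_x$, any time $W>K^*+\|\nu\|M_x$ forces $z_2>K^*$ and hence $\dot W\le0$. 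Thus $W\le\max\{W(0),K^*+\|\nu\|M_x\}$ and $z_2\le W$ is bounded, with no lower bound on $z_1$ required. Without this, or a considerably more laborious time-scale argument showing $y<y^*$ whenever $z_2$ stays large, your proof establishes boundedness of $z_1$, $x$ and $y$, but not of $z_2$.
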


We prove this theorem in steps. First, we remark forward completeness of the interconnected system \eqref{eq:gen_dynamics} in Lemma \ref{lem:GenLin-ExSol}. Then, Theorem~\ref{thm:GenLin-HasProperty} 
establishes the key decrease property: there exist thresholds $L_0,T_0>0$ such 
that whenever $z_1(t)\ge L>L_0$ for a duration $T\ge T_0$, the plant output 
$y$ builds up enough to drive $z_2$ large, forcing $z_1 z_2>\alpha_1/\alpha_2$ 
and hence $\dot z_1<0$. Next, this local decrease property is promoted to 
global boundedness of $z_1$ in Theorem~\ref{thm:z1_bounded}. Finally, 
boundedness of $x$ and $y$ follows from the Hurwitz stability of $A$ with 
bounded input $z_1$ (Lemma~\ref{lem:x_bounded}), and boundedness of 
$z_2$ follows from a Lyapunov-like argument via the auxiliary function 
$W(t):=z_2(t)+\nu^\top x(t)$ (Lemma~\ref{lem:z2_bounded}).

\subsection{Proof of Theorem~\ref{thm:main}}

We first establish forward invariance of $\mathcal{C}= \mathbb{R}^2_+ \times 
\mathbb{R}^n_+$. Since the vector field defining~\eqref{eq:gen_dynamics} is 
polynomial, hence smooth and locally Lipschitz, solutions exist uniquely on 
a maximal interval $[0,t_{\max})$ for any initial condition in $\mathcal{C}$.

\begin{lemma}\label{lem:GenLin-OctInv}
    Let $w(0):=[z_1(0);z_2(0);x(0)]$ be any point in the nonnegative orthant $\mathcal{C}= \mathbb{R}^2_+ \times \mathbb{R}^n_+$. Then the solution $w(t):=[z_1(t);z_2(t);x(t)]$ of~\eqref{eq:gen_dynamics} initialized at $w(0)$
    satisfies $w(t)\in\mathcal{C}$ for all $t\in[0,t_{\max})$.
\end{lemma}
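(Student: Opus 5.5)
The plan is to verify the standard tangency (quasi-positivity) condition for the vector field $f$ of~\eqref{eq:gen_dynamics} on the boundary of $\mathcal{C}$, and then conclude forward invariance. The condition says that for each coordinate $w_i$ and each $w\in\mathcal{C}$ with $w_i=0$, we have $f_i(w)\ge 0$.

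\textbf{Step 1: check the boundary condition coordinate by coordinate.}
\begin{itemize}
\item If $z_1=0$, then $\dot z_1=\alpha_1>0$.
\item If $z_2=0$, then $\dot z_2=\alpha_3 c^\top x\ge 0$, because $c\ge0$ and $x\ge0$.
\item If $x_i=0$, then $\dot x_i=\sum_{j\ne i}A_{ij}x_j+b_iz_1\ge0$. This uses that $A$ is Metzler, so $A_{ij}\ge 0$ for $j\ne i$, together with $b\ge0$, $x\ge0$ and $z_1\ge0$.
\end{itemize}

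\textbf{Step 2: obtain strict inward pointing by perturbation.} The inequalities in Step 1 can be non-strict, so I will not argue from them directly. Instead, for $\varepsilon>0$ I consider the perturbed system $\dot w=f(w)+\varepsilon\mathbf{1}$. On the boundary of $\mathcal{C}$ this field points strictly inward.

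Let $w_\varepsilon$ be its solution with the same initial condition. Suppose $w_\varepsilon$ leaves $\mathcal{C}$. Let $\tau$ be the first time some coordinate becomes negative; then some coordinate $i$ satisfies $w_{\varepsilon,i}(\tau)=0$ with $w_\varepsilon(\tau)\in\mathcal{C}$. By Step 1, $\dot w_{\varepsilon,i}(\tau)\ge\varepsilon>0$, so $w_{\varepsilon,i}$ is strictly increasing through $0$ at $\tau$. This contradicts the assumption that the coordinate becomes negative immediately after $\tau$. Hence $w_\varepsilon(t)\in\mathcal{C}$ on its interval of existence.

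\textbf{Step 3: pass to the limit $\varepsilon\to0$.} Fix any compact $[0,T]\subset[0,t_{\max})$. Since $f$ is locally Lipschitz, continuous dependence on parameters gives that $w_\varepsilon$ exists on $[0,T]$ for small $\varepsilon$ and $w_\varepsilon\to w$ uniformly there. Because $\mathcal{C}$ is closed, $w(t)\in\mathcal{C}$ for all $t\in[0,T]$. As $T<t_{\max}$ was arbitrary, this gives the claim on all of $[0,t_{\max})$.

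The main obstacle is exactly this non-strictness: for instance, $\dot z_2=0$ when $z_2=0$ and $x=0$. The perturbation argument above resolves it. An alternative would be to write each equation as $\dot w_i=-a_i(t)w_i+h_i(t)$ with $h_i\ge0$ along the solution, but that route is circular unless one again localizes to the first exit time.
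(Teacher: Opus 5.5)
Your argument is correct, but it reaches invariance by a different mechanism than the paper. The paper checks exactly your Step 1 sign conditions. It then computes the exterior normal cone of $\mathcal{C}$ at a boundary point $w$, namely $\{-\sum_{i\in I(w)}\lambda_i e_i:\lambda_i\ge0\}$, and invokes the Bony--Brezis theorem. That theorem accepts the non-strict condition $\langle f(w),v\rangle\le0$ directly, so the degenerate tangency you worry about (e.g.\ $z_2=0$, $x=0$) is handled by a cited result.

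You instead prove the needed sufficiency from scratch. The $\varepsilon\mathbf{1}$ perturbation makes the field strictly inward, a first-exit argument keeps $w_\varepsilon$ in $\mathcal{C}$, and continuous dependence plus closedness of $\mathcal{C}$ lets $\varepsilon\to0$. For full precision in Step 2, take $\tau$ as an infimum and, using finiteness of the coordinates, fix one coordinate $i$ that is negative along a sequence $t_k\downarrow\tau$. Then $w_{\varepsilon,i}(\tau)=0$, and $\dot w_{\varepsilon,i}(\tau)\ge\varepsilon$ gives the contradiction.

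Your route is self-contained, needs only standard ODE facts, and uses the product structure of the orthant so no normal-cone computation is required. The paper's route is shorter given the cited theorem and is stated in a form that applies to arbitrary closed sets.

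One correction to your closing remark: the variation-of-constants route is not circular here, because the dependencies form a cascade.
\begin{itemize}
\item Along the solution, $z_1(t)=z_1(0)e^{-\alpha_2\int_0^t z_2(r)\,\mathrm{d}r}+\alpha_1\int_0^t e^{-\alpha_2\int_s^t z_2(r)\,\mathrm{d}r}\,\mathrm{d}s\ge0$, whatever the sign of $z_2$.
\item Then $x(t)=e^{At}x(0)+\int_0^t e^{A(t-s)}bz_1(s)\,\mathrm{d}s\ge0$, since $e^{At}\ge0$ for Metzler $A$, and hence $y=c^\top x\ge0$.
\item The analogous formula for $z_2$, with coefficient $\alpha_4 z_1(t)$ and forcing $\alpha_3 y(t)\ge0$, gives $z_2\ge0$.
\end{itemize}
This is an even more elementary proof, relying only on $e^{At}\ge0$, which the paper already uses elsewhere.
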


\begin{proof}
    This is a standard consequence of the Bony--Brezis invariance theorem 
    applied to the nonnegative orthant. For completeness, 
    a detailed proof is provided in the Appendix~\ref{pr:lemma}.
\end{proof}

Next, we show that solutions of~\eqref{eq:gen_dynamics} are globally defined.

\begin{lemma}\label{lem:GenLin-ExSol}
    The closed-loop system~\eqref{eq:gen_dynamics} is forward complete, that is for any 
    initial condition $z_1(0),z_2(0)\ge 0$ and $x(0)\ge 0$, the solution exists 
    and is unique for all $t\ge 0$.
\end{lemma}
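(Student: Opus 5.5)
We will argue by contradiction, using the standard blow-up alternative for locally Lipschitz vector fields. If $t_{\max}<\infty$, then $\|w(t)\|\to\infty$ as $t\uparrow t_{\max}$. So it suffices to show that on any finite interval $[0,\tau)$ with $\tau\le t_{\max}$, $\tau<\infty$, every component of $w$ stays bounded by a constant depending only on $\tau$ and $w(0)$. Uniqueness is already given by local Lipschitz continuity. Throughout we use Lemma~\ref{lem:GenLin-OctInv}, i.e.\ $z_1,z_2\ge 0$ and $x\ge 0$ on $[0,t_{\max})$.

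The bounds are obtained in cascade order. First, since $z_1z_2\ge 0$, we have $\dot z_1\le \alpha_1$, so $0\le z_1(t)\le z_1(0)+\alpha_1 t$. Next, $x$ solves a linear system driven by $z_1$:
\begin{equation*}
x(t)=\e^{At}x(0)+\int_0^t \e^{A(t-s)}b\,z_1(s)\,ds .
\end{equation*}
Since $A$ is Metzler, $\e^{At}\ge 0$. Since $A$ is Hurwitz, $\|\e^{At}\|\le M$ for some $M$. Together with $b\ge 0$ and the bound on $z_1$, this gives
\begin{equation*}
0\le x(t)\le M\|x(0)\|+M\|b\|\bigl(z_1(0)+\alpha_1 t\bigr)t .
\end{equation*}
Hence $0\le y=c^\top x$ is bounded by a polynomial in $t$. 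Finally, $\dot z_2\le \alpha_3 y$, so $z_2(t)\le z_2(0)+\alpha_3\int_0^t y(s)\,ds$, which is again polynomially bounded. All components therefore remain bounded on $[0,\tau)$, which contradicts blow-up. It follows that $t_{\max}=\infty$.

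The only delicate point is ensuring the signs are right, and this is where we rely on Lemma~\ref{lem:GenLin-OctInv}. We need $z_1z_2\ge0$ so that the sequestration terms can be dropped as upper bounds. We also need nonnegativity to turn the one-sided upper bounds into two-sided bounds. Hurwitz stability is not even essential here: a Gronwall bound $\|\e^{At}\|\le \e^{\|A\|t}$ suffices on finite intervals. No step is a real obstacle, since this is a routine comparison argument.
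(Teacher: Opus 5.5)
Your proposal is correct and follows essentially the same argument as the paper: bound $z_1$ via $\dot z_1\le\alpha_1$, then $x$ (and hence $y$) via the variation-of-constants formula, then $z_2$ via $\dot z_2\le\alpha_3 y$, so the solution stays in a compact set on any finite interval and the continuation theorem forces $t_{\max}=\infty$. You invoke Lemma~\ref{lem:GenLin-OctInv} explicitly for the sign conditions, which the paper uses only implicitly; your displayed vector inequality for $x(t)$ should be read as a bound on $\|x(t)\|$, or componentwise.
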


\begin{proof}
    Consider the maximal interval $[0,t_{\max})$. We will next show that for any finite $t_{\max}$, the solution of \eqref{eq:gen_dynamics} initialized in the positive orthant $\mathcal{C}$ will remain inside a compact set, which will satisfy the standard continuation theorem and contradict maximality of $t_{\max}$.

    Let $z_1(t),z_2(t),x(t)$ be the solution of the initial value problem defined by \eqref{eq:gen_dynamics} and initialized at arbitrary $z_1(0),z_2(0),x(0)\ge0$. Notice that $\dot z_1\leq \alpha_1$ and thus integrating for $t\in[0,t_{\max})$ results in $z_1(t)\leq z_1(0)+\alpha_1t_{\max}=:z_{1,\max}$. From there write
    \begin{align*}
        \|x(t)\| &= \|\e^{At}x(0)+\int_0^t\e^{As}bz_1(t-s)\,\mbox{d}s\|\\
        &\leq \max_{t\in[0,t_{\max}]}\|\e^{At}\|\left(\|x(0)\|+t_{\max}\|b\|z_{1,\max}\right)=:x_{\max}.
    \end{align*}
    Next, notice that $\|y(t)\| = \|c^\top x(t)\|\leq \|c\|\|x(t)\|\leq \|c\|x_{\max}=:y_{\max}$, and finally that $\dot z_2 = \alpha_3 y - \alpha_4z_1z_2\leq \alpha_3 y_{\max}$ implying that $z_2(t)\leq z_2(0)+\alpha_3t_{\max}y_{\max}=:z_{2,\max}$. 

    We can, then, define the compact set $M = \{[z_1;z_2;x]\in\mathcal{C}~|~z_1\leq z_{1,\max},~z_2\le z_{2,\max},~\|x\|\leq x_{\max}\}$ which contains the solution $[z_1(t);z_2(t);x(t)]$ for all $t\in[0,t_{\max})$. Hence the maximal solution is bounded on $[0,t_{\max})$, and since the vector field is locally Lipschitz, the standard continuation theorem implies that the solution can be extended beyond $t_{\max}$. Therefore $t_{\max}=\infty$; see, for instance, Proposition C.3.6 in \cite{mct}.
\end{proof}

The result above establishes existence of solutions for all time, however those solutions may still be unbounded. We will next focus on showing boundedness of solutions for \eqref{eq:gen_dynamics}, and the key point that allows us to do that is the characterization of the following property:

\begin{theorem}
    \label{thm:GenLin-HasProperty}
    For the closed loop system in \eqref{eq:gen_dynamics}, there exist constants $L_0>0$ and $T_0>0$ such that for every $L>L_0$ and $T\geq T_0$, if a solution satisfies $z_1(0)=L$ and $z_1(t)\geq L$ for all $t\in [0,T]$, then $\dot z_1(T)<0$.
\end{theorem}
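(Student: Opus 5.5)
The plan is to avoid tracking $z_2$ directly and instead use the linear combination $v := \alpha_4 z_1 - \alpha_2 z_2$, in which the bilinear sequestration terms cancel:
\[
\dot v = \alpha_1\alpha_4 - \alpha_2\alpha_3\, y .
\]
A lower bound on $y$ then makes $v$ decrease at a definite rate. Since $z_1(T)\ge L$, this decrease forces $z_2(T)$ to be large, and hence $z_1(T)z_2(T)>\alpha_1/\alpha_2$.

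\textbf{Step 1 (lower bound on the plant output).} Since $A$ is Metzler, $\e^{At}\ge 0$ entrywise for $t\ge0$. Using $x(0)\ge 0$, $b\ge 0$ and $z_1(s)\ge L$ on $[0,T]$, the variation of constants formula gives
\[
x(t)\ge \e^{At}x(0) + \int_0^t \e^{As}b\, z_1(t-s)\,\mathrm{d}s \ge L\int_0^t \e^{As}b\,\mathrm{d}s .
\]
Define the step response $g(t):=\int_0^t c^\top \e^{As}b\,\mathrm{d}s$. It is nondecreasing because $c^\top\e^{As}b\ge0$, and it converges to $G>0$ because $A$ is Hurwitz. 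Hence $y(t)\ge L g(t)$ on $[0,T]$. Fix $t_1>0$ with $g(t)\ge G/2$ for all $t\ge t_1$; this $t_1$ depends only on $(A,b,c)$. Then $y(t)\ge LG/2$ on $[t_1,T]$.

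\textbf{Step 2 (decay of $v$).} On $[0,t_1]$ we use only $y\ge 0$, which gives $\dot v\le \alpha_1\alpha_4$. On $[t_1,T]$ we have $\dot v\le \alpha_1\alpha_4-\alpha_2\alpha_3 LG/2$. Impose $L_0\ge 4\alpha_1\alpha_4/(\alpha_2\alpha_3G)$, so that on $[t_1,T]$ we get $\dot v\le -\kappa L$ with $\kappa:=\alpha_2\alpha_3G/4$. Integrating, and using $z_1(0)=L$ and $z_2(0)\ge0$, so that $v(0)\le\alpha_4 L$,
\[
\alpha_4 z_1(T)-\alpha_2 z_2(T)\le \alpha_4 L+\alpha_1\alpha_4 t_1-\kappa L (T-t_1).
\]

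\textbf{Step 3 (conclusion).} Since $z_1(T)\ge L$, the inequality rearranges to
\[
\alpha_2 z_2(T)\ge \kappa L(T-t_1)-\alpha_1\alpha_4 t_1 .
\]
Take $T_0:=t_1+1$. Then for every $T\ge T_0$,
\[
z_1(T)z_2(T)\ge \frac{L(\kappa L-\alpha_1\alpha_4t_1)}{\alpha_2}.
\]
Enlarge $L_0$ so that this right-hand side exceeds $\alpha_1/\alpha_2$ for all $L>L_0$. It then follows that $\dot z_1(T)=\alpha_1-\alpha_2 z_1(T)z_2(T)<0$, as claimed.

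The expected obstacle is that the term $-\alpha_4 z_1z_2$ in $\dot z_2$ could suppress $z_2$ while $z_1$ is large, and $z_1$ has no a priori upper bound. Working with the combination $v$ removes this difficulty entirely, since its derivative does not involve the product $z_1z_2$. The only remaining point to check carefully is that the constants $t_1$, $\kappa$, $L_0$ and $T_0$ are independent of the particular solution; they depend only on $(A,b,c,\alpha_i)$, which Step 1 guarantees.
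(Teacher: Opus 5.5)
Your argument is correct, and it takes a genuinely different route from the paper's.

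Both proofs start from the same step-response bound $y\ge Lg(t)\ge GL/2$ after a fixed delay. From there the paper tracks $z_2$ through its own equation. To handle the damping term $-\alpha_4 z_1z_2$ it needs the window bound $z_1\le L+\alpha_1T_0$ on $[0,T_0]$, and it compares $z_2$ with the solution of a linear ODE. That comparison is what forces the extra machinery:
\begin{itemize}
\item the implicit delay $\tau(\ell)$, defined by a fixed-point equation;
\item the threshold $L_0$, defined via Propositions~\ref{prop:T_fixedpoint}--\ref{prop:Lell4_threshold};
\item a separate first-crossing argument, Lemma~\ref{lem:p_above_theta}, needed because the resulting bound $\rho_z(L,T)$ gets worse as $T$ grows; it keeps $p=z_1z_2$ above $\alpha_1/\alpha_2$ on $[T_0,T]$.
\end{itemize}

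Your integrator variable $\alpha_4z_1-\alpha_2z_2$ removes the bilinear term altogether. Its derivative is $\alpha_2\alpha_3(y^*-y)$ with $y^*=\alpha_1\alpha_4/(\alpha_2\alpha_3)$, which is exactly the integral action of the antithetic motif. This buys you three things:
\begin{itemize}
\item You never need an upper bound on $z_1$.
\item Your lower bound on $z_2(T)$ grows linearly in $T-t_1$, so it holds at every time in $[T_0,T]$ at once. You therefore get $\dot z_1<0$ on the whole interval without anything like Lemma~\ref{lem:p_above_theta}.
\item Your constants are fully explicit. $L_0$ is the maximum of $4\alpha_1\alpha_4/(\alpha_2\alpha_3G)$ and the larger root of $\kappa\ell^2-\alpha_1\alpha_4t_1\ell-\alpha_1$, and $T_0=t_1+1$; the margin $1$ can be traded against the size of $L_0$.
\end{itemize}

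In return, the paper's comparison-based route never uses the exact cancellation between $\alpha_4\dot z_1$ and $\alpha_2\dot z_2$. It treats each controller equation separately. This fits its ``persistently negative feedback'' narrative and the stage-by-stage reasoning it reuses in Section~\ref{sec:4d}. For the system as stated, though, your proof is shorter and cleaner.

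A cosmetic point: the first ``$\ge$'' in your Step~1 is actually an equality.
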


\begin{proof}
    The proof will proceed as follows: first we show that if $z_1(t)$ is ``large'' for a sufficiently long time, then the output $y(t)$ will also become proportionally large persistently. We then show, by a similar argument, that if $y(t)$ is ``large'' for a sufficiently long time, then $z_2(t)$ will also become proportionally large. We finally show that if everything is large enough at some point in time, then the product $z_1(t)z_2(t)$ will be \emph{and remain} large enough so that $\dot z_1<0$ for all future time. From the notions of ``large'' and ``long enough time'' throughout the proof, natural definitions for $L_0$ and $T_0$ arise. In the proof we will first give these expressions and later show that they are the correct values for the desired guarantees.

    Let 
    \begin{equation*}
        g(t):=c^\top \int_0^t\e^{As}b\,\mbox{d}s=c^\top A^{-1}(\e^{At}-I)b.
    \end{equation*}
    Since $A$ is Metzler, $\e^{At}\geq 0$ (element-wise) for all $t\ge0$ (see, for example, \cite{farina2000positive}). This implies $g(t)$ is nondecreasing since $\dot g(t)=c^\top \e^{At}b\geq0$. Moreover, since $A$ is also Hurwitz, the following limit is well defined and finite
    \begin{equation*}
        \lim_{t\to\infty}g(t)=c^\top\int_0^\infty \e^{As}b\,\mbox{d}s=-c^\top A^{-1}b=G > 0.
    \end{equation*}
    Since $g(0)=0$, $g(t)\to G$ and $g(t)$ is continuous, 
    there exists a smallest time $\Delta>0$ such that $g(t)\ge G/2$ for all $t\ge\Delta$.
    We define a function
    $\tau:\mathbb{R}_+\to\mathbb{R}_+$, where for each $\ell\in\mathbb{R}_+$, $\tau(\ell)$ is the unique positive solution of
    \begin{equation}\label{eq:T0_def}
        \tau(\ell) = \Delta+\frac{\ln(2)}{\alpha_4(\ell+\alpha_1\tau(\ell))},
    \end{equation}
    which is guaranteed to exist and be unique by Proposition~\ref{prop:T_fixedpoint} in Appendix~\ref{sec:technical}. We also point out here that $\tau(\ell)$ is continuous on $\mathbb{R}_+$, strictly decreasing in $\ell$ and differentiable on $(0,\infty)$ 
    (with the corresponding right derivative at $\ell=0$, see Proposition~\ref{prop:T_fixedpoint}(ii));
    in particular $\tau'(\ell) < 0$ for every $\ell>0$.
    Next, we define $L_0$ as the unique positive solution $\ell$ of
    \begin{equation}\label{eq:L0_def}
        \frac{\alpha_3 G\ell^2}{4\alpha_4(\ell+\alpha_1\tau(\ell))}=\frac{\alpha_1}{\alpha_2},
    \end{equation}
    whose existence and uniqueness follow from Proposition~\ref{prop:Lell4_threshold} in Appendix~\ref{sec:technical}. 
    With this definition of $L_0$, we finally define $T_0:=\tau(L_0)$. 

    Under the assumption that $z_1(t)\ge L>L_0$ for all $t\in[0,T]$ with 
    $T\ge T_0$, we compute
    \begin{align*}
        y(t) &= c^\top x(t)\\
        &= c^\top \e^{At}x(0)+\int_0^t c^\top\e^{As}b\,z_1(t-s)\,\mathrm{d}s \\
        &\ge g(t)L,
    \end{align*}
    where we used that $z_1(t-s)\ge L$ and that $c^\top \e^{At}x(0)\geq0$, which follows from    
    $\e^{At}\ge 0$, $x(0)\ge 0$,  and $b,c\in\mathbb{R}_+^n$. 
    Since $T\ge T_0=\tau(L_0)>\tau(L)>\Delta$ by 
    Proposition~\ref{prop:T_fixedpoint}(iii), the interval $[\Delta,T]$ is 
    nonempty, and thus
    \begin{equation}\label{eq:y_low}
        y(t) \ge g(t)L \ge \frac{GL}{2} =:\rho_y(L)
        \qquad\text{for all } t\in[\Delta,T].
    \end{equation}
    We now lower bound $z_2$ for $t\le T_0$ by first establishing an upper bound for $z_1(t)$. 
    Define 
    \[
        U(\ell,t):=\ell+\alpha_1t.
    \]
    Since $\dot z_1 = \alpha_1 - \alpha_2 z_1 z_2 \le \alpha_1$ and $z_1(0) = L$, integration gives
    \[
        z_1(t) \le L + \alpha_1 T_0 = U(L,T_0)
        \qquad\text{for all } t\in[0,T_0].
    \]
    Using $y(t)\ge \rho_y(L)$ for $t\ge\Delta$ 
    and $z_1(t)\le U(L,T_0)$ for $t\le T_0$, we obtain 
    \begin{align*}
        \dot z_2 = \alpha_3 y - \alpha_4 z_1 z_2 
        \ge \alpha_3 \rho_y(L) - \alpha_4 U(L,T_0)\,z_2
        \qquad\text{for all } t\in[\Delta,T_0].
    \end{align*}
    By the comparison principle (Gronwall's Lemma), we have $z_2(t)\ge v(t)$ where $v$ solves
    $\dot v = \alpha_3\rho_y(L) - \alpha_4 U(L,T_0)v$, $v(\Delta)=z_2(\Delta)\ge 0$,
    with explicit solution
    \begin{equation*}
        v(t) = \frac{\alpha_3 GL}{2\alpha_4 U(L,T_0)}
        \Bigl[1-e^{-\alpha_4 U(L,T_0)(t-\Delta)}\Bigr]
        + z_2(\Delta)\,e^{-\alpha_4 U(L,T_0)(t-\Delta)}.
    \end{equation*}
    Since $z_2(\Delta)\ge 0$, dropping the initial condition term yields
    \begin{equation}\label{eq:z2_low}
        z_2(t) \ge \frac{\alpha_3 GL}{2\alpha_4 U(L,T_0)}
        \Bigl[1-e^{-\alpha_4 U(L,T_0)(t-\Delta)}\Bigr]
        \qquad\text{for all } t\in[\Delta,T_0].
    \end{equation}
    Define
    \[
        \delta(\ell,t) \,:=\; \frac{\ln 2}{\alpha_4 U(\ell,t)}.
    \]
    Since $\tau(\ell)$ is strictly decreasing by Proposition~\ref{prop:T_fixedpoint}(ii) 
    and $U(\ell,t)$ is increasing in $t$, it follows that $\delta(\ell,t)$ is decreasing in $t$.
    From that we have
    \begin{equation*}
        T_0 = \tau(L_0) > \tau(L) = \Delta + \delta(L,\tau(L)) 
        > \Delta + \delta(L,T_0),
    \end{equation*}
    so the interval $[\Delta+\delta(L,T_0),\,T_0]$ is nonempty. 
    Evaluating~\eqref{eq:z2_low} at time $\Delta+\delta(L,T_0)$, 
    the exponential term equals $1/2$ by definition of $\delta$, giving
    \begin{equation}\label{eq:z2_threshold}
        z_2(t) \ge v(t)
        \ge \frac{\alpha_3 GL}{4\alpha_4(L+\alpha_1 T_0)} 
        =: \rho_z(L,T_0),
        \qquad\text{for all } t\in[\Delta+\delta(L,T_0),\,T_0].
    \end{equation}
    
    We have now established that for all $t\in[\Delta+\delta(L,T_0),\,T_0]$:
    $z_1(t)\ge L$, $y(t)\ge \rho_y(L)$, and $z_2(t)\ge \rho_z(L,T_0)$.
    In particular, from~\eqref{eq:z2_threshold} and the assumption $z_1(T_0)\ge L$, we obtain
    \begin{subequations}
    \begin{equation*}
        z_1(T_0)z_2(T_0) \ge L\rho_z(L,T_0) 
        = \frac{\alpha_3 GL^2}{4\alpha_4(L+\alpha_1 T_0)}.
    \end{equation*}
    By the choice of $L_0$ and since the map 
    $\ell\mapsto\ell\rho_z(\ell,T_0)$ is strictly increasing 
    on $[0,\infty)$, it follows 
    that for every $L>L_0$,
    \begin{equation*}
        L\rho_z(L,T_0) > L_0\rho_z(L_0,T_0) 
        = \frac{\alpha_1}{\alpha_2},
    \end{equation*}
    where the last equality holds by definition of $L_0$ in \eqref{eq:L0_def}. Therefore $z_1(T_0)z_2(T_0) > \alpha_1/\alpha_2$ and hence
    \begin{equation*}
        \dot z_1(T_0) = \alpha_1 - \alpha_2 z_1(T_0)z_2(T_0) < 0.
    \end{equation*}
    \end{subequations}
    However, as $T$ increases beyond $T_0$, the bound $\rho_z(L,T)$ in \eqref{eq:z2_threshold}
    decreases in $T$, so this estimate does not directly extend beyond $T_0$.
    To overcome this, we introduce the product
    \begin{equation*}
        p(t):=z_1(t)z_2(t),
    \end{equation*}
    and point out that $z_1(T_0)z_2(T_0)>\alpha_1/\alpha_2$ from the results above. By Lemma~\ref{lem:p_above_theta}, once the threshold
    $\theta=\alpha_1/\alpha_2$ is exceeded at time $T_0$, it cannot be crossed
    again from above. Consequently, $p(t)>\theta$ for all $t\in[T_0,T]$. Therefore,
    \begin{equation*}
        \dot z_1(t)
        =\alpha_1-\alpha_2p(t)
        <\alpha_1-\alpha_2\theta
        =0,
        \qquad\text{for all }t\in[T_0,T].
    \end{equation*}
    In particular, $\dot z_1(T)<0$, completing the proof.
\end{proof}

The following two results leverage the results in Theorem \ref{thm:GenLin-HasProperty} to prove boundedness of $z_1(t)$.

\begin{lemma}\label{lem:z1_overshoot}
    Let $L_0$ and $T_0$ be defined as in Theorem~\ref{thm:GenLin-HasProperty}. For 
    every $L>L_0$, if a solution satisfies $z_1(0)=L$, then
    $z_1(t) \le L + \alpha_1 T_0$ for all $t\ge 0$.
\end{lemma}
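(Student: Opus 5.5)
The idea is to combine the crude growth bound $\dot z_1\le\alpha_1$ with the decrease property of Theorem~\ref{thm:GenLin-HasProperty}. Since the system is autonomous, that theorem can be applied from any time $s$ at which $z_1(s)=L$. This is legitimate because $L_0$ and $T_0$ depend only on the parameters $\alpha_i,A,b,c$ and not on the initial values of $z_2$ and $x$. Moreover, the state at time $s$ lies in $\mathcal{C}$ by Lemma~\ref{lem:GenLin-OctInv}, and the solution is global by Lemma~\ref{lem:GenLin-ExSol}.

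Argue by contradiction. Suppose there is $t^*>0$ with $z_1(t^*)>L+\alpha_1T_0$, and define the last visit to level $L$ before $t^*$:
\[
s:=\sup\{t\in[0,t^*]: z_1(t)\le L\}.
\]
This set is nonempty because $z_1(0)=L$. By continuity, $z_1(s)=L$ and $z_1(t)>L$ for all $t\in(s,t^*]$. Integrating $\dot z_1\le\alpha_1$ from $s$ gives $z_1(t^*)\le L+\alpha_1(t^*-s)$. Together with the contradiction hypothesis this forces $t^*-s>T_0$.

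Now shift time so that $s$ becomes the origin. For every $T\in[T_0,\,t^*-s]$, the shifted solution satisfies $z_1(0)=L$ and $z_1\ge L$ on $[0,T]$. Theorem~\ref{thm:GenLin-HasProperty} then gives $\dot z_1(s+T)<0$. Hence $z_1$ is strictly decreasing on $[s+T_0,\,t^*]$. On the other hand, the growth bound gives $z_1(s+T_0)\le L+\alpha_1T_0$. Combining the two,
\[
z_1(t^*)\le z_1(s+T_0)\le L+\alpha_1T_0,
\]
which contradicts the choice of $t^*$.

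The only delicate points are the choice of $s$ as the last time $z_1=L$, which ensures $z_1$ stays above $L$ on the whole interval, and checking that the theorem applies to every horizon $T\in[T_0,t^*-s]$ simultaneously. The latter holds because the hypothesis of Theorem~\ref{thm:GenLin-HasProperty} holds for each such $T$ separately, and the theorem can be invoked once for each $T$. I expect no further obstacle.
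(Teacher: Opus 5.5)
Your proof is correct and takes essentially the same route as the paper. You take the last visit $s$ to level $L$, use $\dot z_1\le\alpha_1$ to show the excursion above $L$ lasts longer than $T_0$, and invoke Theorem~\ref{thm:GenLin-HasProperty} in shifted time. The paper differs only cosmetically: it stops at the first hitting time $t_1$ of $L+\alpha_1T_0$ and contradicts $\dot z_1(t_1)\ge 0$ (via Lemma~\ref{lem:first_hitting}) with a single application of the theorem, whereas you apply the theorem for every horizon in $[T_0,t^*-s]$ to get strict decrease on $[s+T_0,t^*]$.
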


\begin{proof}
    We prove the claim by contradiction. Suppose that there exists $t^*>0$ such that 
    $z_1(t^*)>L+\alpha_1 T_0$. Define
    \begin{equation*}
        t_1 := \min\{t\in(0,t^*) : z_1(t)=L+\alpha_1 T_0\},
    \end{equation*}
    which exists by the intermediate value theorem since $z_1(0)=L<L+\alpha_1T_0$ 
    and $z_1(t^*)>L+\alpha_1T_0$. By definition, $z_1(t_1)=L+\alpha_1T_0$ and 
    $z_1(t)<L+\alpha_1T_0$ for all $t\in[0,t_1)$, which implies by Lemma~\ref{lem:first_hitting} that
    \begin{equation*}
        \dot z_1(t_1)\ge 0.
    \end{equation*}
    Next, define $t_2 := \max\{t\in[0,t_1] : z_1(t)=L\}$.    
    Note that the set is nonempty since $z_1(0)=L$, and that
    $t_2<t_1$ because $z_1(t_1)=L+\alpha_1T_0>L$
    . By definition, $z_1(t_2)=L$ and 
    $z_1(t)>L$ for all $t\in(t_2,t_1]$. Since $\dot z_1\le\alpha_1$, 
    integration on $[t_2,t_1]$ gives
    \begin{equation*}
        L+\alpha_1 T_0 = z_1(t_1) \le z_1(t_2)+\alpha_1(t_1-t_2)
        = L + \alpha_1(t_1-t_2),
    \end{equation*}
    hence $t_1-t_2\ge T_0$. Since~\eqref{eq:gen_dynamics} is time-invariant, 
    applying Theorem~\ref{thm:GenLin-HasProperty} on the shifted interval 
    $[t_2,t_1]$ with duration $T=t_1-t_2\ge T_0$ gives
    \begin{equation*}
        \dot z_1(t_1)<0,
    \end{equation*}
    contradicting $\dot z_1(t_1)\ge 0$. 
    Therefore no such $t^*$ exists, and $z_1(t)\le L+\alpha_1 T_0$ for all $t\ge 0$.
\end{proof}

\begin{theorem}\label{thm:z1_bounded}
    Let $L_0$ and $T_0$ be as in Theorem~\ref{thm:GenLin-HasProperty}. Every 
    solution of~\eqref{eq:gen_dynamics} satisfies
    \begin{equation*}
        z_1(t) \le \max\{z_1(0),\,L_0\} + \alpha_1 T_0 \eqqcolon M_{z_1} 
        \qquad\text{for all } t\ge 0.
    \end{equation*}
    In particular, $z_1$ is bounded on $[0,\infty)$.
\end{theorem}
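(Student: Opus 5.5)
The plan is to reduce to Lemma~\ref{lem:z1_overshoot} by waiting for the first time $z_1$ reaches a suitable level $L>L_0$, and then using time-invariance. Fix any $L$ with $L>L_0$ and $L>z_1(0)$; eventually we will let $L\downarrow\max\{z_1(0),L_0\}$. There are two cases.

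\textbf{Case 1: $z_1(t)<L$ for all $t\ge0$.} Then $z_1(t)\le L$ trivially, and $L\le L+\alpha_1T_0$.

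\textbf{Case 2: $z_1$ reaches $L$.} By continuity, let $s:=\min\{t\ge 0: z_1(t)=L\}$. This minimum exists because the set is closed, and $s>0$ since $z_1(0)<L$. On $[0,s]$ we have $z_1(t)\le L$.

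Now consider the solution shifted by $s$. The system~\eqref{eq:gen_dynamics} is autonomous, and by Lemma~\ref{lem:GenLin-OctInv} the state at time $s$ lies in $\mathcal{C}$. The shifted trajectory $t\mapsto (z_1,z_2,x)(s+t)$ is therefore a solution starting in $\mathcal{C}$ with $z_1$-component equal to $L>L_0$ at time $0$. Lemma~\ref{lem:z1_overshoot} then gives $z_1(s+t)\le L+\alpha_1T_0$ for all $t\ge0$. We need that lemma to hold for arbitrary initial $z_2,x\ge0$, which it does, since Theorem~\ref{thm:GenLin-HasProperty} only uses nonnegativity of $x(0)$ and $z_2(\Delta)$.

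Combining the two time ranges, $z_1(t)\le L+\alpha_1T_0$ for all $t\ge0$, for every $L>\max\{z_1(0),L_0\}$. Letting $L$ decrease to $\max\{z_1(0),L_0\}$ yields the bound $M_{z_1}$. Because $L_0,T_0$ depend only on the system parameters, the bound is uniform in $z_2(0),x(0)$, and boundedness of $z_1$ follows.

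The only delicate point is the strictness $L>L_0$ required by Lemma~\ref{lem:z1_overshoot}. The limiting argument handles the edge cases $z_1(0)=L_0$ and $z_1(0)>L_0$, where we cannot simply take $L=z_1(0)$ if $z_1(0)\le L_0$. Alternatively, when $z_1(0)>L_0$ one may apply Lemma~\ref{lem:z1_overshoot} directly with $L=z_1(0)$.
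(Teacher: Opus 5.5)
Your proof is correct and follows essentially the same route as the paper: both apply Lemma~\ref{lem:z1_overshoot} to the time-shifted solution at a moment when $z_1$ equals a level $L$ above $L_0$, and then let $L$ decrease to the threshold. The differences are cosmetic. You argue directly from the first hitting time of a level $L>\max\{z_1(0),L_0\}$ and handle both cases with one limit, whereas the paper applies the lemma directly when $z_1(0)>L_0$ and, when $z_1(0)\le L_0$, argues by contradiction using the last hitting time of $L_0+\varepsilon$.
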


\begin{proof}
    We consider two cases.
    
    \textbf{Case 1: $z_1(0)>L_0$.} Set $L:=z_1(0)>L_0$. 
    Lemma~\ref{lem:z1_overshoot} applies directly and gives
    \begin{equation*}
        z_1(t) \le z_1(0)+\alpha_1 T_0 \qquad\text{for all } t\ge 0.
    \end{equation*}
    \textbf{Case 2: $z_1(0)\le L_0$.} Fix any $\varepsilon>0$ and set 
    $L:=L_0+\varepsilon>L_0$. We claim that $z_1(t)\leq L+\alpha_1T_0$ for all $t\ge 0$.
    Suppose for contradiction that there exists 
    $t_1>0$ such that $z_1(t_1)>L+\alpha_1 T_0$. Since $z_1(0)\le L_0<L$ and $z_1(t_1)>L$,
    the intermediate value theorem gives that there is a well-defined $t_0 := \max\{t\in[0,t_1] : z_1(t)=L\}$. 
    
    Note that $z_1(t_0)=L$ and $z_1(t)>L$ for all $t\in(t_0,t_1]$. 
    Applying Lemma~\ref{lem:z1_overshoot} starting at time \(t_0\) onward with level \(L=L_0+\varepsilon\) gives
    \begin{equation*}
        z_1(t) \le L+\alpha_1 T_0 \qquad\text{for all } t\ge t_0,
    \end{equation*}
    which contradicts $z_1(t_1)>L+\alpha_1 T_0$. Since the bound 
    $z_1(t)\le L_0+\varepsilon+\alpha_1 T_0$ holds for every $\varepsilon>0$, 
    letting $\varepsilon\to 0^+$ gives $z_1(t)\le L_0+\alpha_1 T_0$ for all 
    $t\ge 0$.
    
    Combining both cases yields $z_1(t)\le\max\{z_1(0),L_0\}+\alpha_1 T_0
    =:M_{z_1}$ for all $t\ge 0$.
\end{proof}

From boundedness of $z_1(t)$, boundedness of $x(t)$ and $y(t)$ follow 
immediately: since $A$ is Hurwitz, the linear subsystem $\dot x = Ax + bz_1$ 
is bounded-input bounded-state stable (BIBS), and a bounded input $z_1$ therefore 
produces a bounded state $x$ and output $y = c^\top x$.
This is a routine textbook result (see e.g.\ Proposition 7.3.3 in~\cite{mct}), but it is useful to write explicitly the bounds, so we provide details next.

\begin{lemma}\label{lem:x_bounded}
    Let $z_1(t)\le M_{z_1}$ for all $t\ge 0$. Then there exist constants 
    $M_x, M_y>0$ such that $\|x(t)\| \le M_x$ and $y(t) \le M_y$ for all $t\ge 0$.
    In particular, $x$ and $y$ are bounded on $[0,\infty)$.
\end{lemma}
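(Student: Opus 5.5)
The plan is to use the variation-of-constants formula together with an exponential bound on $\e^{At}$ that follows from $A$ being Hurwitz. Since every eigenvalue of $A$ has negative real part, there exist constants $K\ge 1$ and $\lambda>0$ such that
\begin{equation*}
\|\e^{At}\|\le K\e^{-\lambda t}\qquad\text{for all } t\ge 0.
\end{equation*}
One can take $\lambda$ to be any number strictly between $0$ and $-\max_i \mathrm{Re}\,\lambda_i(A)$. This is the standard exponential estimate, obtained for instance from the Jordan form or from a quadratic Lyapunov function $P$ solving $A^\top P+PA=-I$.

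Next, by Lemma~\ref{lem:GenLin-ExSol} the solution exists for all $t\ge0$, so we may write
\begin{equation*}
x(t)=\e^{At}x(0)+\int_0^t \e^{A(t-s)}b\,z_1(s)\,\mathrm{d}s .
\end{equation*}
By Lemma~\ref{lem:GenLin-OctInv}, $z_1(s)\ge 0$, and by hypothesis $z_1(s)\le M_{z_1}$, so $|z_1(s)|\le M_{z_1}$. Taking norms gives
\begin{equation*}
\|x(t)\|\le K\|x(0)\|+K\|b\|M_{z_1}\int_0^t \e^{-\lambda(t-s)}\mathrm{d}s\le K\|x(0)\|+\frac{K\|b\|M_{z_1}}{\lambda}=:M_x .
\end{equation*}
For the output, the Cauchy--Schwarz inequality yields $y(t)=c^\top x(t)\le \|c\|M_x=:M_y$. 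We also have $y(t)\ge0$, since $c\ge0$ and $x(t)\ge0$. If strictly positive constants are required, for example when $x(0)=0$ and $b=0$ make the bounds degenerate, we simply enlarge $M_x$ and $M_y$ by $1$.

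As an alternative that exploits positivity and gives sharper constants, one can compare componentwise. Because $\e^{At}\ge0$, we have
\begin{equation*}
0\le x(t)\le \e^{At}x(0)+M_{z_1}\int_0^t\e^{As}b\,\mathrm{d}s\le \e^{At}x(0)+M_{z_1}(-A^{-1}b).
\end{equation*}
This would give $y(t)\le c^\top\e^{At}x(0)+GM_{z_1}$, with the first term bounded by $K\|c\|\|x(0)\|$.

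No step here is a real obstacle. The only point needing care is to justify the exponential bound $K\e^{-\lambda t}$ cleanly; I would simply cite it as standard for Hurwitz matrices.
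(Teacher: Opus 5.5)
Your proof is correct and follows the paper's argument essentially line for line. It uses variation of constants and the Hurwitz estimate $\|\e^{At}\|\le K\e^{-\lambda t}$ to get $M_x=K\|x(0)\|+K\|b\|M_{z_1}/\lambda$ and $M_y=\|c\|M_x$; your componentwise alternative using $\e^{At}\ge 0$ and $\int_0^t \e^{As}b\,\mathrm{d}s\le -A^{-1}b$ is also valid and gives the sharper output bound $y(t)\le c^\top\e^{At}x(0)+GM_{z_1}$.
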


\begin{proof}
    The explicit solution of $\dot x = Ax + bz_1$ is given by
    \begin{equation*}
        x(t) = \e^{At}x(0) + \int_0^t \e^{A(t-s)}\,bz_1(s)\,\mbox{d}s.
    \end{equation*}
    Since $A$ is Hurwitz, there exist constants $m \ge 1$ and $\lambda > 0$ 
    such that $\|e^{At}\| \le m e^{-\lambda t}$ for all $t \ge 0$. Using the bound
    $z_1(s) \le M_{z_1}$, we obtain
    \begin{align*}
        \|x(t)\| 
        &\le \|\e^{At}\|\|x(0)\| 
         + \int_0^t \|\e^{A(t-s)}\|\|b\|\,z_1(s)\,\mbox{d}s \\
        &\le m\e^{-\lambda t}\|x(0)\| 
         + mM_{z_1}\|b\|\int_0^t \e^{-\lambda(t-s)}\,\mbox{d}s \\
        &= m\e^{-\lambda t}\|x(0)\| 
         + \frac{mM_{z_1}\|b\|}{\lambda}\bigl(1-\e^{-\lambda t}\bigr)\\
        &\le m\|x(0)\| + \frac{mM_{z_1}\|b\|}{\lambda}\eqqcolon M_x 
        \qquad\text{for all } t\ge 0,
    \end{align*}
    Hence $x$ is bounded on $[0,\infty)$. Since $y(t)=c^\top x(t)$ and $c\ge 0$, it follows 
    immediately that
    \begin{equation*}
        y(t) \le \|c\| M_x \eqqcolon M_y \qquad\text{for all } t \ge 0,
    \end{equation*}
    so $y$ is bounded on $[0,\infty)$.
\end{proof}

From boundedness of $x(t)$ and $y(t)$, boundedness of $z_2(t)$ follows 
via a Lyapunov-like argument using the auxiliary function 
$W(t):=z_2(t)+\nu^\top x(t)$.

\begin{lemma}\label{lem:z2_bounded}
    Let $z_1(t)\le M_{z_1}$, $\|x(t)\|\le M_x$, and $y(t)\le M_y$ for all 
    $t\ge 0$. Then there exists a constant $M_{z_2}>0$ such that $z_2(t) \le M_{z_2}$ for all $t\ge 0.$
    In particular, $z_2$ is bounded on $[0,\infty)$.
\end{lemma}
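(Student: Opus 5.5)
The plan is to use the auxiliary function announced before the statement, $W(t):=z_2(t)+\nu^\top x(t)$, with $\nu$ chosen so that the plant output term $\alpha_3 y$ in $\dot z_2$ is cancelled exactly. Then $\dot W$ no longer contains $y$ and has a definite sign whenever $z_2$ is large. Concretely, I would take
\[
\nu^\top := -\alpha_3\, c^\top A^{-1}.
\]
Since $A$ is Metzler and Hurwitz, $-A^{-1}=\int_0^\infty \e^{As}\,\mathrm{d}s\ge 0$ entrywise, using $\e^{As}\ge 0$ as in the proof of Theorem~\ref{thm:GenLin-HasProperty}. Hence $\nu\ge 0$. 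Because $x(t)\ge 0$ by Lemma~\ref{lem:GenLin-OctInv}, this gives $0\le \nu^\top x(t)\le \|\nu\|M_x$ for all $t\ge0$.

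The first step is the derivative computation. From $\nu^\top A=-\alpha_3 c^\top$ and $\nu^\top b=-\alpha_3 c^\top A^{-1}b=\alpha_3 G$,
\[
\dot W=\alpha_3 y-\alpha_4 z_1z_2+\nu^\top A x+\nu^\top b z_1
=\alpha_3 G z_1-\alpha_4 z_1 z_2
= \alpha_4 z_1\Bigl(\tfrac{\alpha_3 G}{\alpha_4}-z_2\Bigr)=\alpha_4 z_1\,(z_2^*-z_2).
\]
Since $z_1\ge 0$, this shows $\dot W\le 0$ whenever $z_2\ge z_2^*$. Notably, the bound on $z_1$ is not needed here, only its sign, which matters because $z_1$ may come close to zero.

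The second step is to turn this into an invariant sublevel bound. Set $K:=z_2^*+\|\nu\|M_x$. If $W(t)>K$, then $z_2(t)=W(t)-\nu^\top x(t)>K-\|\nu\|M_x=z_2^*$, so $\dot W(t)\le 0$. A standard first-crossing argument then gives $W(t)\le \max\{W(0),K\}$ for all $t\ge0$. To run it, suppose $W(t^*)>\max\{W(0),K\}$ and take the last time before $t^*$ at which $W$ equals that maximum; on the intervening interval $W>K$, so $W$ is nonincreasing there, which is a contradiction. Finally, since $\nu^\top x\ge0$,
\[
z_2(t)\le W(t)\le \max\{z_2(0)+\nu^\top x(0),\;z_2^*+\|\nu\|M_x\}=:M_{z_2}.
\]

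I expect the main obstacle to be the choice of $\nu$ rather than any estimate. The naive bound $\dot z_2\le \alpha_3 M_y$ only gives linear growth, because the damping term $\alpha_4 z_1 z_2$ degenerates when $z_1$ is small. The cancellation $\nu^\top A=-\alpha_3 c^\top$ is what removes $y$, and nonnegativity of $\nu$, which comes from the Metzler structure, is what lets $W$ dominate $z_2$ and be bounded by $z_2+\|\nu\|M_x$.
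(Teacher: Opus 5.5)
Your proposal is correct and follows the paper's proof essentially line by line. It uses the same auxiliary function $W=z_2+\nu^\top x$ with $\nu^\top=-\alpha_3c^\top A^{-1}\ge 0$, the same identity $\dot W=\alpha_4 z_1(z_2^*-z_2)$, and the same threshold $z_2^*+\|\nu\|M_x$, which together give $z_2\le W\le\max\{W(0),z_2^*+\|\nu\|M_x\}$; the only difference is cosmetic, namely that your single last-crossing argument at level $\max\{W(0),K\}$ merges the paper's two cases $W(0)<\gamma$ and $W(0)\ge\gamma$.
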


\begin{proof}
Define the auxiliary function
\begin{equation*}
    W(t) := z_2(t) + \nu^\top x(t), 
    \qquad \nu^\top := -\alpha_3 c^\top A^{-1} \ge 0
    \footnote{Since \(A\) is Hurwitz and Metzler, it is a nonsingular \(M\)-matrix. Hence $ -A^{-1}\ge 0$
    elementwise. Since $c\ge 0$ and $\alpha_3>0$, it follows that $\nu^\top=-\alpha_3 c^\top A^{-1}=\alpha_3 c^\top(-A^{-1})\ge 0.$ Thus \(\nu\ge 0\).},
\end{equation*}
where $\nu\in\mathbb{R}^n$ is well defined since $A$ is invertible. 
Differentiating and substituting the dynamics~\eqref{eq:gen_dynamics},
\begin{align*}
    \dot W 
    &= \dot z_2 + \nu^\top \dot x \\
    &= \bigl(\alpha_3 y - \alpha_4 z_1 z_2\bigr) 
       + \nu^\top\bigl(Ax + bz_1\bigr) \\
    &= \alpha_3 c^\top x - \alpha_4 z_1 z_2 
       + (\nu^\top A)x + (\nu^\top b)\,z_1.
\end{align*}
By the choice of $\nu^\top = -\alpha_3 c^\top A^{-1}$, the $x$-terms 
cancel: $\nu^\top A = -\alpha_3 c^\top$. Moreover, $\nu^\top b = -\alpha_3 c^\top A^{-1}b = \alpha_3 G,$
where $G:=-c^\top A^{-1}b>0$ is the DC gain. Therefore
\begin{equation}\label{eq:Wdot_general}
    \dot W = \alpha_3 G\,z_1 - \alpha_4 z_1 z_2 
           = \alpha_4 z_1\!\left(K^* - z_2\right),
\end{equation}
where $K^*:=\alpha_3 G/\alpha_4 = z_2^*$ is the equilibrium value of $z_2$. We now show that $W(t)$ is bounded, which in turn yields boundedness of $z_2$.

Since $\nu\ge 0$ and $x(t)\ge 0$, we have $\nu^\top x(t)\ge 0$, and therefore
\begin{equation*}
    z_2(t) = W(t) - \nu^\top x(t) \le W(t).
\end{equation*}

Set $\gamma := K^*+\|\nu\|M_x$. We claim that $W(t)\le\max\{W(0),\gamma\}$ 
for all $t\ge 0$. We consider two cases.

\textbf{Case 1: $W(0)<\gamma$.} Suppose for contradiction that $W(t_1)>\gamma$ 
for some $t_1>0$. Define
\[
    t_0:=\sup\{t\in[0,t_1]: W(t)\le\gamma\}.
\]
Since $W(0)<\gamma$ and $W(t_1)>\gamma$, we have $t_0\in[0,t_1)$.
By continuity, $W(t_0)=\gamma$, and by definition of $t_0$, $W(t)>\gamma$ for all $t\in(t_0,t_1]$. By Lemma~\ref{lem:x_bounded}, $\|x(t)\|\le M_x$ for all $t\ge0$.
Notice that, by the Cauchy--Schwarz inequality,
\begin{equation*}
    \nu^\top x(t)\le \|\nu\|\,\|x(t)\|\le \|\nu\|M_x
    \qquad\text{for all } t\ge0.
\end{equation*}
Hence, for all $t\in[t_0,t_1]$,
\[
    z_2(t)=W(t)-\nu^\top x(t)
    \ge W(t)-\|\nu\|M_x
    \ge\gamma-\|\nu\|M_x
    =K^*.
\]
Thus
\[
    \dot W(t)=\alpha_4z_1(t)(K^*-z_2(t))\le0
    \qquad\text{for all }t\in[t_0,t_1].
\]
Therefore $W$ is nonincreasing on $[t_0,t_1]$, and so
\[
    W(t_1)\le W(t_0)=\gamma,
\]
contradicting $W(t_1)>\gamma$. Hence $W(t)\le\gamma$ for all $t\ge0$.

\textbf{Case 2: $W(0)\ge\gamma$.} 
Suppose for contradiction that 
$W(t_1)>W(0)$ for some $t_1>0$. Since $W(t_1)>W(0)\ge\gamma$, the set
\begin{equation*}
    \Omega:=\{t\in[0,t_1]\mid W(t)>\gamma\}
\end{equation*}
is nonempty. Let $I$ be the connected component of $\Omega$ containing $t_1$.
Then there exists $t_0\in[0,t_1)$ such that
\begin{equation*}
    I=(t_0,t_1]
    \qquad\text{or}\qquad
    I=[t_0,t_1],
\end{equation*}
with $W(t)>\gamma$ for all $t\in I$. Moreover, either $W(t_0)=\gamma$ or $t_0=0$.
For all $t\in I$, we have
\begin{equation*}
    z_2(t) = W(t)-\nu^\top x(t) 
    \ge W(t)-\|\nu\|M_x > K^*.
\end{equation*}
Therefore, using~\eqref{eq:Wdot_general},
\begin{equation*}
    \dot W(t) = \alpha_4 z_1(t)\bigl(K^*-z_2(t)\bigr)\le 0
    \qquad\text{for all }t\in I.
\end{equation*}
Hence $W$ is nonincreasing on $I$.

If $t_0=0$, then, if necessary taking the limit from the right endpoint of the component,
\begin{equation*}
    W(t_1)\le W(0),
\end{equation*}
contradicting the choice of $t_1$. If $t_0>0$, then $W(t_0)=\gamma\le W(0)$, and again
taking the right limit if $I=(t_0, t_1]$,
\begin{equation*}
    W(t_1)\le W(t_0)=\gamma\le W(0),
\end{equation*}
contradicting $W(t_1)>W(0)$. Therefore $W(t)\le W(0)$ for all $t\ge0$ in this case.

Combining both cases, $W(t)\le\max\{W(0),\gamma\}$ for all $t\ge 0$.
Consequently, since $\nu\ge0$ and $x(t)\ge0$,
\begin{equation*}
    z_2(t) \le W(t)-\nu^\top x(t) \le W(t) \le \max\{W(0),\gamma\}\eqqcolon M_{z_2}
    \qquad\text{for all } t\ge 0.
\end{equation*}
Thus $z_2$ is bounded on $[0,\infty)$.
\end{proof}

We can now conclude the proof of Theorem~\ref{thm:main}. Lemma~\ref{lem:GenLin-ExSol} gives existence for all $t\ge0$. Theorem~\ref{thm:z1_bounded} gives boundedness of $z_1$. Lemma~\ref{lem:x_bounded} then gives boundedness of $x$ and $y$, and Lemma~\ref{lem:z2_bounded} gives boundedness of $z_2$. Hence the full solution $(z_1,z_2,x)$ is bounded on $[0,\infty)$.

\section{Explicit boundedness analysis in $\mathbb{R}^4$}\label{sec:4d}

As a concrete and biologically motivated instance of~\eqref{eq:gen_dynamics}, 
originally studied in~\cite{AFC2016}, we consider the four-dimensional system 
obtained by taking $n=2$, where the antithetic controller states $(z_1, z_2)$ 
regulate the two-dimensional linear cascade $(x_1, x_2)$:
\begin{equation}\label{eq:ca_dynamics}
    \begin{aligned}
    \dot z_1 &= \alpha_1-\alpha_2 z_1 z_2\\
    \dot z_2 &= \alpha_3 x_2-\alpha_4 z_1 z_2
    \end{aligned}
    \qquad \textrm{and} \qquad
    \begin{aligned}
    \dot x_1 &= \beta_1 z_1-\beta_2 x_1\\
    \dot x_2 &= \beta_3 x_1-\beta_4 x_2,
    \end{aligned}
\end{equation}
with $\alpha_i,\beta_i>0$ for $i=1,\dots,4$ and $z_1(0),z_2(0),x_1(0),x_2(0)\ge 0$. Since this system 
has been widely studied in the literature, we treat it explicitly here. 
Moreover, the explicit cascade structure $z_1\to x_1\to x_2\to z_2$ allows 
us to derive bounds stage by stage, rather than through the step-response 
estimate $y\ge GL/2$ used in the general case, yielding fully 
explicit bounds $\overline{M}_{z_1}, \overline{M}_{x_1}, \overline{M}_{x_2}, \overline{M}_{z_2}$ 
in terms of $\alpha_i$ and $\beta_i$ for $i=1,\dots,4$.

The correspondence of~\eqref{eq:ca_dynamics} with~\eqref{eq:gen_dynamics} 
is given by
\begin{equation*}
    A = \begin{bmatrix} -\beta_2 & 0 \\ \beta_3 & -\beta_4 \end{bmatrix}, \qquad
    b = \begin{bmatrix} \beta_1 \\ 0 \end{bmatrix}, \qquad
    c = \begin{bmatrix} 0 \\ 1 \end{bmatrix},
\end{equation*}
so that $x=[x_1,x_2]^\top$, $y=x_2$, and the DC gain is
$G = -c^\top A^{-1}b = \beta_1\beta_3/(\beta_2\beta_4)>0$.
Note that $\alpha_4$ appears only in $\dot z_2$ and does not affect $G$.

Since~\eqref{eq:ca_dynamics} is a special case of~\eqref{eq:gen_dynamics}, 
the main theorem of the general case applies directly:

\begin{corollary}\label{cor:4d_bounded}
    Every solution of system~\eqref{eq:ca_dynamics} starting from an initial 
    state in $\mathcal{C}:=\mathbb{R}^4_+$ is bounded.
\end{corollary}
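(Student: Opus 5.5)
The plan is to obtain Corollary~\ref{cor:4d_bounded} directly from Theorem~\ref{thm:main}. To do so, I only need to check that system~\eqref{eq:ca_dynamics}, under the identification of $A$, $b$, $c$ given above, meets every standing hypothesis of~\eqref{eq:gen_dynamics}. With $x=[x_1,x_2]^\top$ and $y=c^\top x=x_2$, the equations $\dot x = Ax+bz_1$ expand to $\dot x_1=\beta_1 z_1-\beta_2x_1$ and $\dot x_2=\beta_3x_1-\beta_4x_2$. The $z$-equations coincide verbatim, with $\alpha_3 y=\alpha_3x_2$. So the two systems are literally the same vector field on $\mathbb{R}^2_+\times\mathbb{R}^2_+=\mathbb{R}^4_+$.

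The hypotheses are then checked one at a time. $A$ is Metzler because its only off-diagonal entries are $0$ and $\beta_3>0$. $A$ is lower triangular with diagonal entries $-\beta_2,-\beta_4<0$, so its eigenvalues are $-\beta_2$ and $-\beta_4$, and $A$ is Hurwitz. We have $b=[\beta_1,0]^\top\in\mathbb{R}^2_+$ and $c=[0,1]^\top\in\mathbb{R}^2_+$. For the DC gain, I compute $-A^{-1}b$ explicitly. Solving $A\xi=-b$ gives $\xi_1=\beta_1/\beta_2$ and $\xi_2=\beta_3\xi_1/\beta_4$, so $G=c^\top\xi=\beta_1\beta_3/(\beta_2\beta_4)>0$. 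All $\alpha_i>0$ by assumption.

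With these checks done, Theorem~\ref{thm:main} applies to every initial condition in $\mathcal{C}=\mathbb{R}^4_+$. It yields existence for all $t\ge0$ and boundedness of $(z_1,z_2,x_1,x_2)$, which is the claim. There is no genuine obstacle here; the only step requiring care is the DC gain computation, since positivity of $G$ is the one hypothesis that is not immediate from sign patterns. If explicit bounds are wanted later, Theorem~\ref{thm:z1_bounded} and Lemmas~\ref{lem:x_bounded} and~\ref{lem:z2_bounded} can be instantiated with these $A,b,c$, but this is not needed for the corollary itself.
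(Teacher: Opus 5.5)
Your proposal is correct and follows the paper's own justification: the paper observes that \eqref{eq:ca_dynamics} is the special case of \eqref{eq:gen_dynamics} with the stated $A$, $b$, $c$ and $G=\beta_1\beta_3/(\beta_2\beta_4)>0$, so Theorem~\ref{thm:main} applies directly, and your hypothesis checks (Metzler, Hurwitz, $b,c\ge 0$, $G>0$) are exactly what that requires. The paper's subsequent subsection, which repeats the argument stage by stage through the cascade $z_1\to x_1\to x_2\to z_2$, serves only to produce explicit bounds in terms of $\alpha_i,\beta_i$; as you note, it is not needed for boundedness itself.
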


The remainder of this section makes the boundedness result explicit in terms 
of $\alpha_i$ and $\beta_i$ for $i=1,\dots,4$.
Corollary~\ref{cor:z1_bounded_4d} gives an explicit bound $\overline{M}_{z_1}$ 
for $z_1$, and Corollaries~\ref{cor:x_bounded_4d}--\ref{cor:z2_bounded_4d} give explicit bounds 
$\overline{M}_{x_1}, \overline{M}_{x_2}, \overline{M}_{z_2}$ for $x_1, x_2, z_2$ respectively. 

\subsection{Proof of Corollary~\ref{cor:4d_bounded}}

We already know, from the general results, that $\mathcal{C}:=\mathbb{R}^4_+$ 
is forward invariant. 
Similarly, forward completeness 
follows directly from Lemma~\ref{lem:GenLin-ExSol}, since~\eqref{eq:ca_dynamics} 
is a special case of~\eqref{eq:gen_dynamics}.

Now we proceed to the key property of the proof.

\begin{corollary}\label{cor:z1_decrease}
    Consider system~\eqref{eq:ca_dynamics}. There exist constants $L_0>0$ and 
    $T_0>0$ such that for every $L>L_0$ and $T\ge T_0$, if a solution satisfies 
    $z_1(0)=L$ and $z_1(t)\ge L$ for all $t\in[0,T]$, then $\dot z_1(T)<0$.
\end{corollary}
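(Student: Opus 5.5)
The statement is a direct specialization of Theorem~\ref{thm:GenLin-HasProperty}, so the simplest plan is to invoke it. System~\eqref{eq:ca_dynamics} is exactly~\eqref{eq:gen_dynamics} with $n=2$ and the matrices $A,b,c$ given above. We first check the hypotheses of the general theorem. $A$ is lower triangular with diagonal entries $-\beta_2,-\beta_4<0$, hence Hurwitz. Its only off-diagonal entry is $\beta_3>0$, hence Metzler. We also have $b,c\in\mathbb{R}^2_+$ and $G=\beta_1\beta_3/(\beta_2\beta_4)>0$. Theorem~\ref{thm:GenLin-HasProperty} then yields $L_0,T_0>0$ with the stated property, and this already proves the corollary.

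Since the purpose of this section is explicit constants, I would also make $L_0$ and $T_0$ concrete by rerunning the proof stage by stage along the cascade $z_1\to x_1\to x_2\to z_2$, instead of going through the step response $g(t)$. Assume $z_1\ge L$ on $[0,T]$. From $\dot x_1\ge \beta_1L-\beta_2x_1$ and $x_1(0)\ge0$, the comparison principle gives
\[
x_1(t)\ge \tfrac{\beta_1L}{\beta_2}\bigl(1-\e^{-\beta_2 t}\bigr),
\]
so $x_1\ge \beta_1L/(2\beta_2)$ for $t\ge \Delta_1:=\ln 2/\beta_2$. Similarly, $\dot x_2\ge \beta_3\beta_1L/(2\beta_2)-\beta_4x_2$ on $[\Delta_1,T]$, so $x_2\ge GL/4$ for $t\ge \Delta:=\Delta_1+\ln2/\beta_4$. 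This plays the role of~\eqref{eq:y_low}, with $\Delta$ explicit and $G/2$ replaced by $G/4$.

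From here I would copy the remaining steps of the general proof verbatim. Bound $z_1\le L+\alpha_1 t$ and use comparison for $z_2$ on $[\Delta,T_0]$, which gives $z_2\ge \alpha_3GL/(8\alpha_4(L+\alpha_1T_0))$ after the half-time $\ln2/(\alpha_4(L+\alpha_1T_0))$. Then define $\tau(\ell)$ via~\eqref{eq:T0_def} with this $\Delta$, and $L_0$ via~\eqref{eq:L0_def} with $4$ replaced by $8$; Propositions~\ref{prop:T_fixedpoint} and~\ref{prop:Lell4_threshold} apply unchanged, since only constants change. Set $T_0=\tau(L_0)$. This gives $z_1(T_0)z_2(T_0)>\alpha_1/\alpha_2$, and Lemma~\ref{lem:p_above_theta} extends the inequality to all of $[T_0,T]$, so $\dot z_1(T)<0$.

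The only delicate point is the same one as in the general case: checking that $\tau(L)<T_0$ and that $\ell\mapsto \ell^2/(\ell+\alpha_1T_0)$ is increasing, so that the bounds at $L>L_0$ dominate those at $L_0$. These follow from the monotonicity facts already proved in the appendix, so I expect no real obstacle beyond bookkeeping of constants.
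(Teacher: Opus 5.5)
Your proposal is correct, and your first paragraph alone already proves the corollary. The statement is purely existential, so once you check that $A$ is Hurwitz and Metzler, $b,c\ge0$ and $G=\beta_1\beta_3/(\beta_2\beta_4)>0$, Theorem~\ref{thm:GenLin-HasProperty} applies verbatim. The paper uses exactly this shortcut for Corollary~\ref{cor:4d_bounded}, but not for this corollary.

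The paper's proof of this corollary is your second part. It reruns the argument along the cascade $z_1\to x_1\to x_2\to z_2$, with $\Delta=\ln2/\beta_2+\ln2/\beta_4$, $x_2\ge GL/4$, the factor $8$ in the equation defining $L_0$, and $T_0=\tau(L_0)$. It does this to obtain closed-form constants, which later feed $\overline{M}_{z_1}$ and the numerical values $L_0\approx1.5293$, $T_0\approx1.3942$.

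The trade-off between the two routes is this. The direct route gives the sharper bound $y\ge GL/2$, but $\Delta$ is defined only implicitly through $g(\Delta)=G/2$. The cascade route loses a factor of $2$, but every constant is explicit in $\alpha_i,\beta_i$.

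One bookkeeping point in your explicit version: you should not cite Lemma~\ref{lem:p_above_theta} as stated. Its hypotheses refer to the general $L_0,T_0$, and its proof uses $y\ge GL/2$ after the general $\Delta$ to show $L_0>2\alpha_4\theta/(\alpha_3G)$, where $\theta=\alpha_1/\alpha_2$. With only $x_2\ge GL/4$ after $\Delta_1+\Delta_2$, you must instead show $L_0>L_*:=4\alpha_4\theta/(\alpha_3G)$. This follows from $L_*\bar\rho_z(L_*,\tau(L_*))<\alpha_3GL_*/(8\alpha_4)=\theta/2$, exactly as in the general case, and the paper records this change in a footnote.
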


\begin{proof}
The proof follows the same structure as Theorem~\ref{thm:GenLin-HasProperty}, 
but exploits the explicit cascade structure of~\eqref{eq:ca_dynamics}: lower 
bounds on $x_1$, $x_2$, and $z_2$ are derived stage by stage rather than 
via the step-response $g(t)$. We then use $L_0$ to show 
$z_1(T_0)z_2(T_0)>\alpha_1/\alpha_2$ for every $L>L_0$, forcing 
$\dot z_1(T_0)<0$, and verify this domination persists on $[T_0,T]$ while 
$z_1(t)\ge L$ for all $t\in[0,T]$.

Define $\Delta_1 := \ln 2/\beta_2$, $\Delta_2 := \ln 2/\beta_4$, and 
$\delta(\ell,t) := \ln 2/(\alpha_4 U(\ell,t))$ for $\ell\ge0$, $t\ge 0$, 
where $U(\ell,t):=\ell+\alpha_1 t$. For each $\ell\ge0$, define $\tau(\ell)$ 
as the unique positive solution of
\begin{equation*}
    \tau(\ell) = \Delta_1 + \Delta_2 + \delta(\ell,\tau(\ell)),
\end{equation*}
which is guaranteed to exist by Proposition~\ref{prop:T_fixedpoint}, 
taking $\Delta := \Delta_1+\Delta_2$ in that proposition. Then, fix $L_0$ as the unique positive solution of
\begin{equation*}
    \frac{\alpha_3 G\ell^2}{8\alpha_4(\ell+\alpha_1\tau(\ell))} = \frac{\alpha_1}{\alpha_2},
\end{equation*}
whose existence and uniqueness follow from Proposition~\ref{prop:Lell4_threshold},
where $G = \beta_1\beta_3/(\beta_2\beta_4)>0$ is the DC gain.
Once $L_0$ is fixed, we set $T_0:=\tau(L_0)$, which is a fixed positive 
constant. Since $\tau(\ell)$ is strictly decreasing 
(Proposition~\ref{prop:T_fixedpoint}(ii)), for every $L>L_0$ we have 
$\tau(L)<\tau(L_0)=T_0$, so that $T\ge T_0>\tau(L)$ whenever $T\ge T_0$. 
We will next show that the corollary holds for these choices of $L_0$ and $T_0$.

Let $L>L_0$, $T\ge T_0$ and suppose
\begin{equation}\label{eq:z1_low}
    z_1(t)\ge L, \qquad \text{for all } t\in[0,T].
\end{equation}
We derive lower bounds for $x_1(t)$, $x_2(t)$, and $z_2(t)$, each holding 
after a finite delay within $[0,T_0]\subset[0,T]$.

\begin{enumerate}[leftmargin=*]
    \item \textbf{Estimate of $x_1(t)$}. On $[0,T]$, using 
    $\dot x_1=\beta_1 z_1-\beta_2 x_1$ together with~\eqref{eq:z1_low}, 
    we obtain
    \begin{equation*}
        \dot x_1 \ge \beta_1 L - \beta_2 x_1.
    \end{equation*}
    Define $\dot w_1(t)=\beta_1 L-\beta_2 w_1(t)$ with $w_1(0)=x_1(0)$. 
    By the comparison principle, this leads to $x_1(t)\ge w_1(t)$ for all
    $t\in[0,T]$. Hence
    \begin{equation*}
        x_1(t)\ge \frac{\beta_1 L}{\beta_2}\Bigl[1-e^{-\beta_2 t}\Bigr] 
        + x_1(0)e^{-\beta_2 t}
        \ge \frac{\beta_1 L}{\beta_2}\Bigl[1-e^{-\beta_2 t}\Bigr].
    \end{equation*}
    Since $1-e^{-\beta_2 t}$ is increasing in $t$. By defining $\Delta_1\coloneqq(\ln 2)/\beta_2$, this yields
    \begin{equation}\label{eq:x1_low}
        x_1(t)\ge \frac{\beta_1 L}{\beta_2}\Bigl[1-e^{-\beta_2 t}\Bigr]\ge
        \frac{\beta_1 L}{\beta_2}\Bigl[1-e^{-\beta_2\Delta_1}\Bigr] 
        = \frac{\beta_1 L}{2\beta_2} \eqqcolon \bar\rho_1(L)
        \qquad\text{for all } t\in[\Delta_1,T].
    \end{equation}

    \item \textbf{Estimate of $x_2(t)$}. On the interval $[\Delta_1,T]$, 
    using $\dot x_2=\beta_3 x_1-\beta_4 x_2$ and~\eqref{eq:x1_low} 
    results in
    \begin{equation*}
        \dot x_2 \ge \beta_3\bar\rho_1(L) - \beta_4 x_2.
    \end{equation*}
    Define $\dot w_2(t)=\beta_3\bar\rho_1(L)-\beta_4 w_2(t)$ with 
    $w_2(\Delta_1)=x_2(\Delta_1)$. By the comparison principle, we have 
    $x_2(t)\ge w_2(t)$ for all $t\in[\Delta_1,T]$. Hence
    \begin{equation*}
        x_2(t) \ge \frac{\beta_3}{\beta_4}\bar\rho_1(L)
        \Bigl[1-e^{-\beta_4(t-\Delta_1)}\Bigr]
        + x_2(\Delta_1)e^{-\beta_4(t-\Delta_1)}
        \ge \frac{\beta_3}{\beta_4}\bar\rho_1(L)
        \Bigl[1-e^{-\beta_4(t-\Delta_1)}\Bigr].
    \end{equation*}
    Since $t-\Delta_1\ge0$, thus $1-e^{-\beta_4(t-\Delta_1)}$ is increasing in $t$. Defining $\Delta_2\coloneqq(\ln 2)/\beta_4$, this results in
    \begin{equation}\label{eq:x2_low}
        x_2(t)\ge\frac{\beta_3}{\beta_4}\bar\rho_1(L)
        \Bigl[1-e^{-\beta_4(t-\Delta_1)}\Bigr] \ge 
        \frac{\beta_3}{\beta_4}\bar\rho_1(L)
        \Bigl[1-e^{-\beta_4\Delta_2}\Bigr]
        =\frac{\beta_3}{2\beta_4}\bar\rho_1(L)
        \eqqcolon \bar\rho_2(L) 
        \qquad\text{for all } t\in[\Delta_1+\Delta_2,T].
    \end{equation}

    \item \textbf{Upper bound for $z_1$.}
    Before lower bounding $z_2$, we must first derive an upper bound for 
    $z_1$ in the interval. From the equation $\dot z_1=\alpha_1-\alpha_2 
    z_1 z_2$ and the nonnegativity $z_1,z_2\ge 0$, we obtain
    $\dot z_1(t)=\alpha_1-\alpha_2 z_1 z_2\le\alpha_1$ for all $t\ge 0.$
    Integrating on any interval $[0,t]$ yields
    \begin{equation*}
        z_1(t) \le z_1(0) + \alpha_1 t.
    \end{equation*}
    Using the assumption $z_1(t)\ge L$ for all $t\in[0,T_0]$ and 
    $z_1(0)=L$, then for every $t\in[0,T_0]$,
    \begin{equation}\label{eq:z1_window}
        z_1(t)\le L+\alpha_1 t
        \le L+\alpha_1T_0 = U(L,T_0).
    \end{equation}
    Thus, on any time interval of length $T_0$ on which $z_1$ remains 
    above $L$, it is also bounded from above by $U(L,T_0)$.

    \item \textbf{Estimate of $z_2(t)$}. On the interval 
    $[\Delta_1+\Delta_2,T_0]$, using $\dot z_2=\alpha_3 x_2-\alpha_4 
    z_1 z_2$, \eqref{eq:x2_low}, and~\eqref{eq:z1_window} yield
    \begin{equation*}
        \dot z_2(t) \ge \alpha_3\bar\rho_2(L) - \alpha_4 U(L,T_0)\,z_2(t).
    \end{equation*}
    Define $\dot v(t)=\alpha_3\bar\rho_2(L)-\alpha_4 U(L,T_0)v(t)$ with 
    $v(\Delta_1+\Delta_2)=z_2(\Delta_1+\Delta_2)$.
    By the comparison principle we have $z_2(t)\ge v(t)$ for all
    $t\in[\Delta_1+\Delta_2,T_0]$. Hence, with $\bar U\coloneqq U(L,T_0)$,
    \begin{equation*}
        \begin{aligned}
            z_2(t) &\ge \frac{\alpha_3}{\alpha_4\bar U}\bar\rho_2(L) 
            \Bigl[1-e^{-\alpha_4\bar U(t-(\Delta_1+\Delta_2))}\Bigr]
            + z_2(\Delta_1+\Delta_2)\,e^{-\alpha_4\bar U(t-(\Delta_1+\Delta_2))} \\
            &\ge \frac{\alpha_3}{\alpha_4\bar U}\bar\rho_2(L) 
            \Bigl[1-e^{-\alpha_4\bar U(t-(\Delta_1+\Delta_2))}\Bigr].
        \end{aligned}
    \end{equation*}
    Since $t-(\Delta_1+\Delta_2)\ge 0$, thus $1-e^{-\alpha_4\bar U(t-(\Delta_1+\Delta_2))}$ is increasing in $t$. Using $\delta(L,T_0)=(\ln 2)/(\alpha_4 U(L,T_0))$,
    \begin{equation}\label{eq:z2_low_4d}
        z_2(t) \ge \frac{\alpha_3}{2\alpha_4 U(L,T_0)}\bar\rho_2(L) 
        = \frac{\alpha_3 GL}{8\alpha_4(L+\alpha_1 T_0)} 
        \eqqcolon \bar\rho_z(L,T_0)
        \qquad\text{for all } t\in[\Delta_1+\Delta_2+\delta(L,T_0),T_0].
    \end{equation}
\end{enumerate}

Note that the bounds on $x_1$ and $x_2$ persist while $z_1(t)\ge L$, and 
hence are valid on $[\Delta_1,T]$ and $[\Delta_1+\Delta_2,T]$ respectively. 
For $z_2$, however, the damping coefficient $\alpha_4 z_1$ depends on $z_1$ 
itself, so we need the upper bound $z_1(t)\le U(L,T_0)$, which is only 
available on $[0,T_0]$. This is why the estimate for $z_2$ is restricted 
to $[0,T_0]$.

Unlike the general case, where $y\ge GL/2$ follows from $g(t)\ge G/2$, 
here the bounds on $x_1$, $x_2$, and $z_2$ are derived explicitly stage 
by stage through the cascade $z_1\to x_1\to x_2\to z_2$. 
From~\eqref{eq:z1_low} and~\eqref{eq:z2_low_4d}, 
$z_1(T_0)z_2(T_0)\ge L\bar\rho_z(L,T_0)$.
By the choice of $L_0$ and since $\ell\mapsto\ell\bar\rho_z(\ell,T_0)$ 
is strictly increasing in $\ell$, for every 
$L>L_0$,
\begin{equation*}
    L\bar\rho_z(L,T_0) > L_0\bar\rho_z(L_0,T_0) = \frac{\alpha_1}{\alpha_2},
\end{equation*}
where the last equality holds by definition of $L_0$. Thus
$z_1(T_0)z_2(T_0)>\alpha_1/\alpha_2$ and 
$\dot z_1(T_0)=\alpha_1-\alpha_2 z_1(T_0)z_2(T_0)<0$.

However, as $T$ increases beyond $T_0$, the bound $\bar\rho_z(L,T)$
decreases in $T$, so this estimate does not directly extend beyond $T_0$.
As in the proof of Theorem~\ref{thm:GenLin-HasProperty}, introduce the product
$p(t):=z_1(t)z_2(t)$ and $\theta\coloneqq\alpha_1/\alpha_2$.
We claim that $p(t)>\theta$ for all $t\in[T_0,T]$.
Indeed, the proof is identical to that of Lemma~\ref{lem:p_above_theta}, with
$\rho_z$ replaced by $\bar\rho_z$ and
$L^*:=2\alpha_4\theta/(\alpha_3G)$ replaced by
$L_*:=4\alpha_4\theta/(\alpha_3G)$.
    \footnote{The factor-of-two difference 
    between $L^*$ and $L_*$ reflects the difference in the lower bound on $y$: 
    in the general case, $y\ge GL/2$ follows from the step-response estimate 
    $g(t)\ge G/2$, while here the explicit cascade gives 
    $x_2\ge\bar\rho_2(L)=GL/4$, 
    which carries an additional factor of $1/2$ through the two cascade stages. 
    The remainder of the argument is unchanged: one shows 
    $L_*\bar\rho_{z}(L_*,\tau(L_*))<\theta$, which implies $L_0>L_*$, and 
    hence $\alpha_3\bar\rho_2(L_0)-\alpha_4\theta>0$, giving 
    $\dot p(t_0)>0$ and contradicting $\dot p(t_0)\le 0$.}
Therefore,
\begin{equation*}
    \dot z_1(t) = \alpha_1-\alpha_2 p(t) < \alpha_1-\alpha_2\theta = 0,
    \qquad\text{for all } t\in[T_0,T].
\end{equation*}
In particular, $\dot z_1(T)<0$, completing the proof.
\end{proof}

With Corollary~\ref{cor:z1_decrease} established, the global boundedness 
of $z_1$ follows by exactly the same argument as in the general case 
(Lemma~\ref{lem:z1_overshoot} and Theorem~\ref{thm:z1_bounded}).

\begin{corollary}\label{cor:z1_overshoot_4d}
    Let $L_0$ and $T_0$ be defined as in Corollary~\ref{cor:z1_decrease}. For 
    every $L>L_0$, if a solution satisfies $z_1(0)=L$, then
    $z_1(t) \le L + \alpha_1 T_0$ for all $t\ge 0$.
\end{corollary}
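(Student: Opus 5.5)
The argument is the proof of Lemma~\ref{lem:z1_overshoot} transplanted verbatim, with Theorem~\ref{thm:GenLin-HasProperty} replaced by Corollary~\ref{cor:z1_decrease}. The only ingredients are the decrease property of Corollary~\ref{cor:z1_decrease}, the universal bound $\dot z_1=\alpha_1-\alpha_2z_1z_2\le\alpha_1$ (valid since the orthant is forward invariant), time invariance of~\eqref{eq:ca_dynamics}, and the first-hitting fact of Lemma~\ref{lem:first_hitting}.

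Fix $L>L_0$ and a solution with $z_1(0)=L$, and suppose for contradiction that $z_1(t^*)>L+\alpha_1T_0$ for some $t^*>0$.

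\begin{enumerate}[leftmargin=*]
    \item \textbf{First crossing of the upper level.} Let $t_1$ be the first time in $(0,t^*)$ at which $z_1=L+\alpha_1T_0$. This time exists by continuity and the intermediate value theorem. Since $z_1<L+\alpha_1T_0$ on $[0,t_1)$, Lemma~\ref{lem:first_hitting} gives $\dot z_1(t_1)\ge0$.
    \item \textbf{Last visit to the lower level.} Let $t_2:=\max\{t\in[0,t_1]: z_1(t)=L\}$. This set is nonempty (it contains $0$) and closed, and $t_2<t_1$. On $(t_2,t_1]$ we have $z_1>L$: otherwise the intermediate value theorem would produce a later time where $z_1=L$.
    \item \textbf{Length of the excursion.} Integrating $\dot z_1\le\alpha_1$ over $[t_2,t_1]$ gives $L+\alpha_1T_0\le L+\alpha_1(t_1-t_2)$, so $t_1-t_2\ge T_0$.
    \item \textbf{Contradiction.} Shift time by $t_2$, which is legitimate because~\eqref{eq:ca_dynamics} is autonomous. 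The shifted solution satisfies $z_1(0)=L$ and $z_1\ge L$ on $[0,T]$ with $T:=t_1-t_2\ge T_0$. Corollary~\ref{cor:z1_decrease} then yields $\dot z_1(t_1)<0$, contradicting $\dot z_1(t_1)\ge0$.
\end{enumerate}

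Hence $z_1(t)\le L+\alpha_1T_0$ for all $t\ge0$.

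I do not expect a real obstacle, since all the analytic work sits in Corollary~\ref{cor:z1_decrease}. The one point to check is that the shifted trajectory satisfies exactly that corollary's hypotheses. Corollary~\ref{cor:z1_decrease} requires only $z_1(0)=L$, $z_1\ge L$ on the window, and a nonnegative initial state, with no further conditions on $z_2$, $x_1$ or $x_2$. Nonnegativity of the state at time $t_2$ is supplied by forward invariance of $\mathbb{R}^4_+$.
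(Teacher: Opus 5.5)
Your proposal is correct and follows the paper's own argument: the paper proves this corollary by declaring it identical to the proof of Lemma~\ref{lem:z1_overshoot} (first crossing $t_1$ of the level $L+\alpha_1T_0$, last visit $t_2$ to $L$, the estimate $t_1-t_2\ge T_0$ from $\dot z_1\le\alpha_1$, and a time-shifted application of the decrease property), with Corollary~\ref{cor:z1_decrease} in place of Theorem~\ref{thm:GenLin-HasProperty}. Your added justifications, that $z_1>L$ on $(t_2,t_1]$ by the intermediate value theorem and that the state at time $t_2$ is nonnegative by forward invariance, are correct and make explicit points the paper leaves implicit.
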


\begin{corollary}\label{cor:z1_bounded_4d}
    Let $L_0$ and $T_0$ be defined as in Corollary~\ref{cor:z1_decrease}. Every 
    solution of~\eqref{eq:ca_dynamics} satisfies
    \begin{equation*}
        z_1(t) \le \max\{z_1(0),\,L_0\} + \alpha_1 T_0 \eqqcolon \overline{M}_{z_1}
        \qquad\text{for all } t\ge 0.
    \end{equation*}
    In particular, $z_1$ is bounded on $[0,\infty)$.
\end{corollary}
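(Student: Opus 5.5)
The argument is identical to that of Theorem~\ref{thm:z1_bounded}, with Corollary~\ref{cor:z1_overshoot_4d} playing the role of Lemma~\ref{lem:z1_overshoot}. Corollary~\ref{cor:z1_overshoot_4d} itself follows verbatim from the proof of Lemma~\ref{lem:z1_overshoot}: there one only uses $\dot z_1\le\alpha_1$, Lemma~\ref{lem:first_hitting}, time invariance, and the decrease property, and the decrease property is now supplied by Corollary~\ref{cor:z1_decrease} instead of Theorem~\ref{thm:GenLin-HasProperty}. I would take it as given, with $L_0,T_0$ those of Corollary~\ref{cor:z1_decrease}. Forward completeness and nonnegativity of solutions of~\eqref{eq:ca_dynamics} follow from Lemmas~\ref{lem:GenLin-OctInv} and~\ref{lem:GenLin-ExSol}, so $z_1$ is defined and continuous on $[0,\infty)$.

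I split into two cases according to $z_1(0)$.

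\textbf{Case $z_1(0)>L_0$.} Set $L:=z_1(0)$. Corollary~\ref{cor:z1_overshoot_4d} then gives $z_1(t)\le z_1(0)+\alpha_1T_0$ for all $t\ge0$.

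\textbf{Case $z_1(0)\le L_0$.} Fix $\varepsilon>0$ and let $L:=L_0+\varepsilon$. Suppose, for contradiction, that $z_1(t_1)>L+\alpha_1T_0$ for some $t_1>0$. By continuity and the intermediate value theorem, the last time $t_0:=\max\{t\in[0,t_1]: z_1(t)=L\}$ is well defined, and $z_1(t_0)=L$. The system is autonomous, so the shifted trajectory $s\mapsto (z_1,z_2,x_1,x_2)(t_0+s)$ is again a solution starting in $\mathbb{R}^4_+$, with initial value $z_1=L>L_0$. Applying Corollary~\ref{cor:z1_overshoot_4d} to it gives $z_1(t)\le L+\alpha_1T_0$ for all $t\ge t_0$, which contradicts the choice of $t_1$. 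Hence $z_1(t)\le L_0+\varepsilon+\alpha_1T_0$ for every $\varepsilon>0$, and letting $\varepsilon\to0^+$ gives $z_1(t)\le L_0+\alpha_1T_0$.

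Combining the two cases yields $z_1(t)\le\max\{z_1(0),L_0\}+\alpha_1T_0=\overline{M}_{z_1}$. The only delicate point is the time shift. Corollary~\ref{cor:z1_overshoot_4d} requires nothing about $z_2,x_1,x_2$ at the starting time other than nonnegativity, which is guaranteed by invariance of $\mathcal{C}$; this is exactly why its constants $L_0,T_0$ are independent of the initial state. No real obstacle is expected beyond checking this.
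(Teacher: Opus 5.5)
Your proposal is correct and follows essentially the paper's route. The paper proves this corollary by repeating the two-case argument of Theorem~\ref{thm:z1_bounded} with Corollary~\ref{cor:z1_overshoot_4d} in place of Lemma~\ref{lem:z1_overshoot}, including the level $L_0+\varepsilon$, the last-hitting time, the time-invariance shift in the case $z_1(0)\le L_0$, and the limit $\varepsilon\to0^+$.
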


Once $z_1$ is known to be bounded by $\overline{M}_{z_1}$, boundedness of $x_1$ 
and $x_2$ follows by direct comparison.

\begin{corollary}\label{cor:x_bounded_4d}
    Let $z_1(t)\le \overline{M}_{z_1}$ for all $t\ge 0$. There exist constants
    \begin{equation*}
        \max\left\{x_1(0),\,\frac{\beta_1}{\beta_2}\overline{M}_{z_1}\right\}\eqqcolon \overline{M}_{x_1}>0, \qquad
        \max\left\{x_2(0),\,\frac{\beta_3}{\beta_4}\overline{M}_{x_1}\right\}\eqqcolon \overline{M}_{x_2}>0,
    \end{equation*}
    such that
    $x_1(t)\le \overline{M}_{x_1}$ and $x_2(t)\le \overline{M}_{x_2}$ for all $t\ge 0$.
    In particular, $x_1$ and $x_2$ are bounded on $[0,\infty)$.
\end{corollary}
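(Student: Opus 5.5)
The plan is to exploit the cascade structure and apply a scalar comparison argument twice, first to $x_1$ and then to $x_2$. No variation-of-constants estimate with $\|e^{At}\|$ is needed.

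\textbf{Step 1 (bound on $x_1$).} Since $z_1(t)\le \overline{M}_{z_1}$, the first plant equation gives
\begin{equation*}
    \dot x_1 = \beta_1 z_1 - \beta_2 x_1 \le \beta_1\overline{M}_{z_1} - \beta_2 x_1 .
\end{equation*}
Comparing with the solution $w$ of $\dot w = \beta_1\overline{M}_{z_1}-\beta_2 w$, $w(0)=x_1(0)$, yields
\begin{equation*}
    x_1(t)\le \frac{\beta_1}{\beta_2}\overline{M}_{z_1}\bigl(1-e^{-\beta_2 t}\bigr)+x_1(0)e^{-\beta_2 t}.
\end{equation*}
The right-hand side is a convex combination of $x_1(0)$ and $\beta_1\overline{M}_{z_1}/\beta_2$, so it is at most their maximum, which is $\overline{M}_{x_1}$.

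Alternatively, one can argue directly without computing the solution. Let $m:=\overline{M}_{x_1}$. Whenever $x_1(t)>\beta_1\overline{M}_{z_1}/\beta_2$ we have $\dot x_1<0$, so $x_1$ cannot cross above $m$. This is the same first-hitting argument used in Lemma~\ref{lem:z2_bounded}, and it could be cited through Lemma~\ref{lem:first_hitting}. Either route is routine.

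\textbf{Step 2 (bound on $x_2$).} Using the bound from Step 1, $x_1(t)\le \overline{M}_{x_1}$ for all $t\ge0$, so
\begin{equation*}
    \dot x_2 = \beta_3 x_1-\beta_4 x_2\le \beta_3\overline{M}_{x_1}-\beta_4 x_2 .
\end{equation*}
The same comparison gives $x_2(t)\le \frac{\beta_3}{\beta_4}\overline{M}_{x_1}(1-e^{-\beta_4 t})+x_2(0)e^{-\beta_4 t}\le \overline{M}_{x_2}$.

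\textbf{Step 3 (positivity).} Both constants are strictly positive because $\overline{M}_{z_1}\ge \alpha_1T_0>0$ and all $\beta_i>0$. Nonnegativity of the states, from forward invariance of $\mathbb{R}^4_+$ (Lemma~\ref{lem:GenLin-OctInv}), is not actually needed for these upper bounds. It only ensures that $x_1,x_2$ are bounded in absolute value, which gives boundedness on $[0,\infty)$.

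There is no real obstacle here. The only point requiring care is the comparison principle for a scalar linear inequality. One can justify it by noting that $\frac{d}{dt}\bigl(e^{\beta_2 t}(x_1-w)\bigr)\le0$, and similarly for $x_2$.
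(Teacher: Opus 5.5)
Your proof is correct and follows the paper's argument essentially verbatim. The paper also compares with the solution of $\dot w_1=\beta_1\overline{M}_{z_1}-\beta_2 w_1$, $w_1(0)=x_1(0)$, notes that $w_1$ stays between $x_1(0)$ and $\beta_1\overline{M}_{z_1}/\beta_2$, and repeats this for $x_2$ using $x_1\le\overline{M}_{x_1}$. Two things you add are left implicit in the paper and are welcome: the explicit justification of the comparison step via $\frac{d}{dt}\bigl(e^{\beta_2 t}(x_1-w)\bigr)=\beta_1 e^{\beta_2 t}(z_1-\overline{M}_{z_1})\le 0$, and your remark that nonnegativity of the states is what supplies the lower bounds needed for boundedness.
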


Once $x_1$ and $x_2$ are known to be bounded, boundedness of $z_2$ follows 
by the same Lyapunov-like argument as in the proof of Lemma~\ref{lem:z2_bounded}. 
The auxiliary function $W(t):=z_2(t)+\nu^\top x(t)$ takes the explicit form 
$W(t):=z_2(t)+cx_1(t)+dx_2(t)$, where $c:=\beta_3\alpha_3/(\beta_2\beta_4)$ 
and $d:=\alpha_3/\beta_4$ are the explicit values of 
$\nu^\top=-\alpha_3 c^\top A^{-1}$ for this system, and $K_*:=\alpha_3 G/\alpha_4$ 
coincides with $K^*$ in Lemma~\ref{lem:z2_bounded}, now with 
$G=\beta_1\beta_3/(\beta_2\beta_4)$ explicit.

\begin{corollary}\label{cor:z2_bounded_4d}
    Let $z_1(t)\le \overline{M}_{z_1}$, $x_1(t)\le \overline{M}_{x_1}$, and $x_2(t)\le \overline{M}_{x_2}$ for all 
    $t\ge 0$. There exists a constant $\overline{M}_{z_2}>0$ such that $z_2(t)\le \overline{M}_{z_2}$ 
    for all $t\ge 0$. In particular, $z_2$ is bounded on $[0,\infty)$.
\end{corollary}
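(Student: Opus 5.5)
We follow the proof of Lemma~\ref{lem:z2_bounded} essentially verbatim, specialized to the cascade~\eqref{eq:ca_dynamics}. We introduce the auxiliary function
\begin{equation*}
    W(t):=z_2(t)+c\,x_1(t)+d\,x_2(t),\qquad c=\frac{\beta_3\alpha_3}{\beta_2\beta_4},\quad d=\frac{\alpha_3}{\beta_4},
\end{equation*}
and first check by direct differentiation that the $x$-terms cancel. The $x_2$ contributions are $\alpha_3x_2-d\beta_4x_2=0$. The $x_1$ contributions are $d\beta_3x_1-c\beta_2x_1=0$. The remaining $z_1$ term is $c\beta_1z_1=\alpha_3Gz_1$. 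Hence
\begin{equation*}
    \dot W=\alpha_3Gz_1-\alpha_4z_1z_2=\alpha_4z_1\bigl(K_*-z_2\bigr),\qquad K_*=\frac{\alpha_3G}{\alpha_4}.
\end{equation*}
This is exactly~\eqref{eq:Wdot_general}, now in explicit form.

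Next, we use the hypotheses $x_1\le\overline{M}_{x_1}$ and $x_2\le\overline{M}_{x_2}$ together with nonnegativity (Lemma~\ref{lem:GenLin-OctInv}). These give
\begin{equation*}
    0\le c\,x_1+d\,x_2\le c\,\overline{M}_{x_1}+d\,\overline{M}_{x_2},
\end{equation*}
which is sharper than the Cauchy--Schwarz step used before. We then set $\gamma:=K_*+c\,\overline{M}_{x_1}+d\,\overline{M}_{x_2}$. Whenever $W(t)\ge\gamma$, we get $z_2(t)\ge W(t)-c\overline{M}_{x_1}-d\overline{M}_{x_2}\ge K_*$, and since $z_1\ge0$ this forces $\dot W(t)\le0$.

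With this sign property in hand, the same first-exit argument as Cases~1 and~2 of Lemma~\ref{lem:z2_bounded} applies. Suppose $W$ exceeds $\max\{W(0),\gamma\}$ at some time $t_1$. Take the last time $t_0$ before $t_1$ at which $W\le\max\{W(0),\gamma\}$. On $[t_0,t_1]$ we have $W\ge\gamma$, so $W$ is nonincreasing there. This gives $W(t_1)\le W(t_0)\le\max\{W(0),\gamma\}$, a contradiction. We conclude
\begin{equation*}
    z_2(t)\le W(t)\le\max\bigl\{z_2(0)+c\,x_1(0)+d\,x_2(0),\;\gamma\bigr\}=:\overline{M}_{z_2}.
\end{equation*}
This bound is explicit in $\alpha_i$, $\beta_i$ and the initial data, and it is positive because $K_*>0$.

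There is no real obstacle here. The only points requiring care are the algebraic verification that the chosen $c$ and $d$ annihilate the $x$-terms, and the boundary bookkeeping in the first-exit argument when $t_0=0$. For the latter we handle the connected component of $\{W>\gamma\}$ as in Lemma~\ref{lem:z2_bounded}.
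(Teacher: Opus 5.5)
Your proposal is correct and follows the paper's proof: the same $W=z_2+cx_1+dx_2$, the same cancellation giving $\dot W=\alpha_4 z_1(K_*-z_2)$, the same threshold $\bar\gamma=K_*+c\overline{M}_{x_1}+d\overline{M}_{x_2}$, and the same conclusion $z_2\le W\le\max\{W(0),\bar\gamma\}$. Your single first-exit argument with threshold $\max\{W(0),\gamma\}$ merges the paper's two cases, so the connected-component bookkeeping you mention at the end is not actually needed.
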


\section{Numerical Example} \label{sec:numerics}

\subsection{Positive linear system in $\mathbb{R}^n$}\label{subsec:numerics_general}

We first illustrate the boundedness result for the general
system~\eqref{eq:gen_dynamics} with $n=12$. The system parameters are
$\alpha_1=1$, $\alpha_2=30$, $\alpha_3=1$, and $\alpha_4=30$. The matrix $A$ is constructed as a Hurwitz Metzler matrix by choosing
a nonnegative off-diagonal matrix $R\in\mathbb{R}^{12\times 12}$ with
randomly generated nonnegative entries and zero diagonal, and setting
\begin{equation*}
A = R-\diag\Bigl(1+\sum\nolimits_j R_{ij}\Bigr),
\end{equation*}
so that $A$ is Metzler and strictly diagonally dominant with negative
diagonal entries, hence Hurwitz. The vectors $b,c\in\mathbb{R}^{12}_+$
are chosen with random positive entries. The resulting theoretical constants are
\begin{equation*}
    \Delta_x \approx 0.6587,\quad
    G \approx 2.0684,\quad
    L_0 \approx 2.3237,\quad
    T_0 \approx 0.6664.
\end{equation*}
To illustrate the dependence on the initial condition, we simulate ten trajectories with
$z_2(0)=0,$ $x(0)=0,$ and $z_1(0)$ uniformly distributed in the interval $[0,L_0)$. Since
$z_1(0)\le L_0$ for all simulations, the theoretical bounds are identical across all trajectories and are given by
\begin{equation*}
    M_{z_1} \approx 2.9901,\qquad
    M_x \approx 7.4848,\qquad
    M_{z_2} \approx 13.2746.
\end{equation*}

Figure~\ref{fig:general_numerical_results} shows the simulated trajectories together with the corresponding theoretical bounds. Although the trajectories originate from different initial conditions, they all remain below the same bounds, illustrating the uniform nature of the estimates within the region $z_1(0)\le L_0$. The bound $M_{z_1}$ is relatively tight, while $M_x$ and $M_{z_2}$ are more conservative, as expected from the use of norm estimates and comparison arguments in the proof.

\begin{figure}[h!]
    \centering
    \subfloat[]{
    \includegraphics[width=0.2315\textwidth]{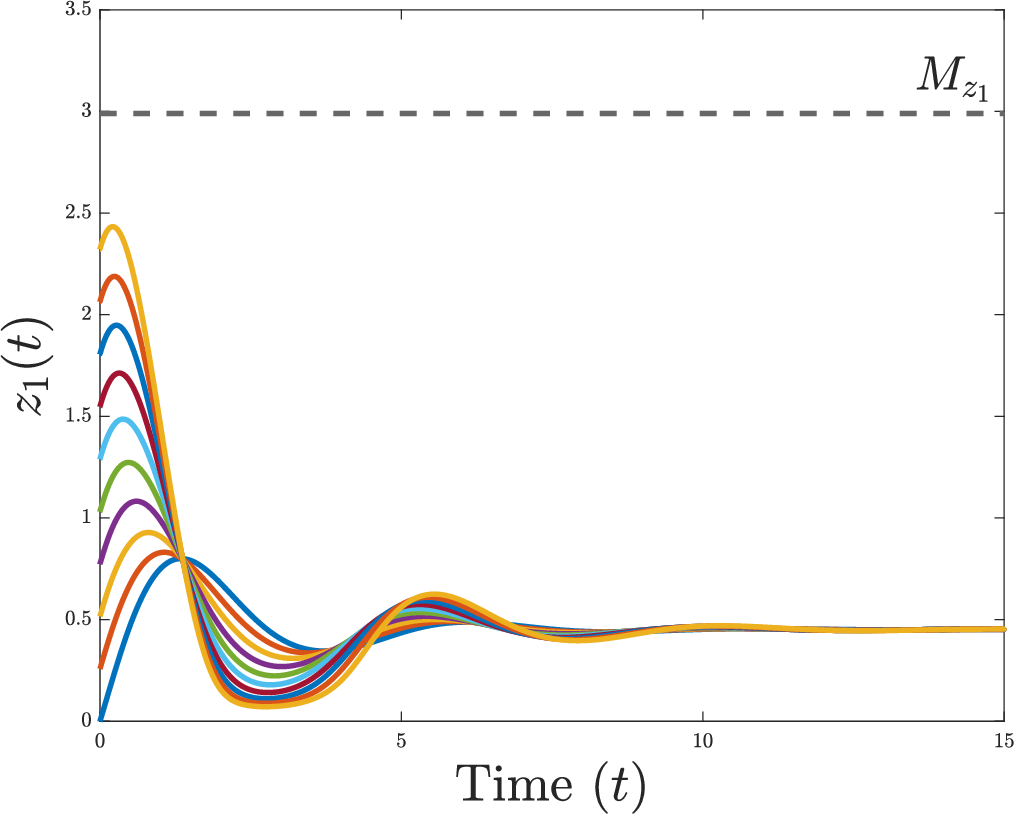}
    \label{fig:general_z1}
    }
    \subfloat[]{
    \includegraphics[width=0.23\textwidth]{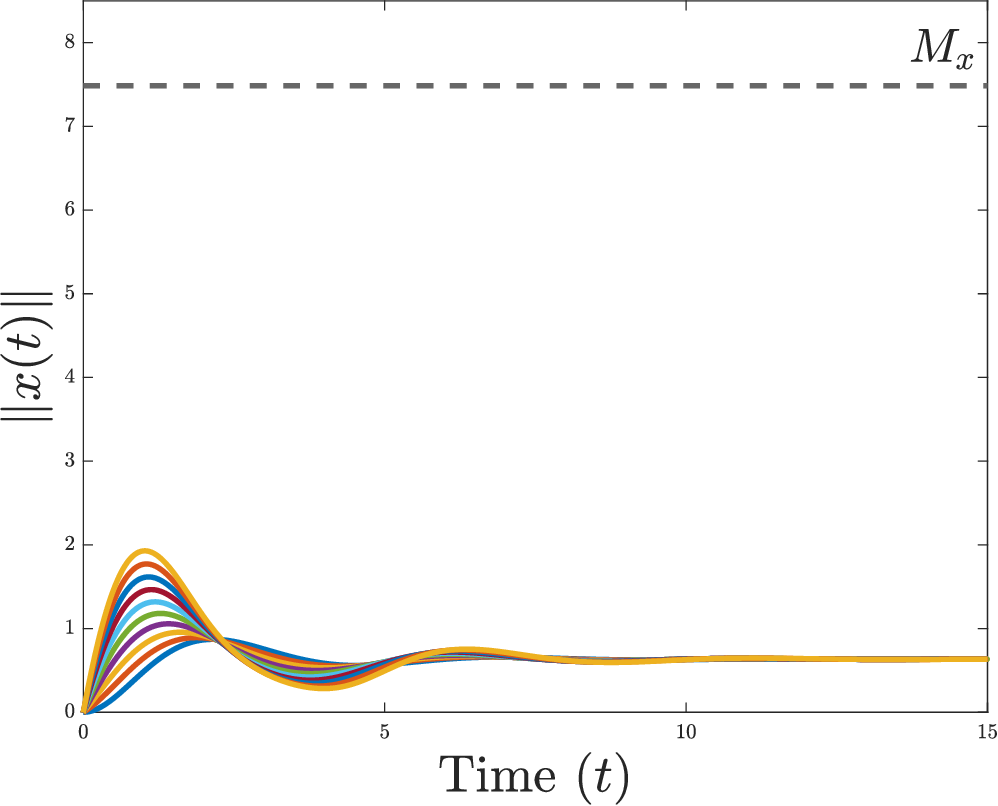}
    \label{fig:general_x}
    }
    \subfloat[]{
    \includegraphics[width=0.2315\textwidth]{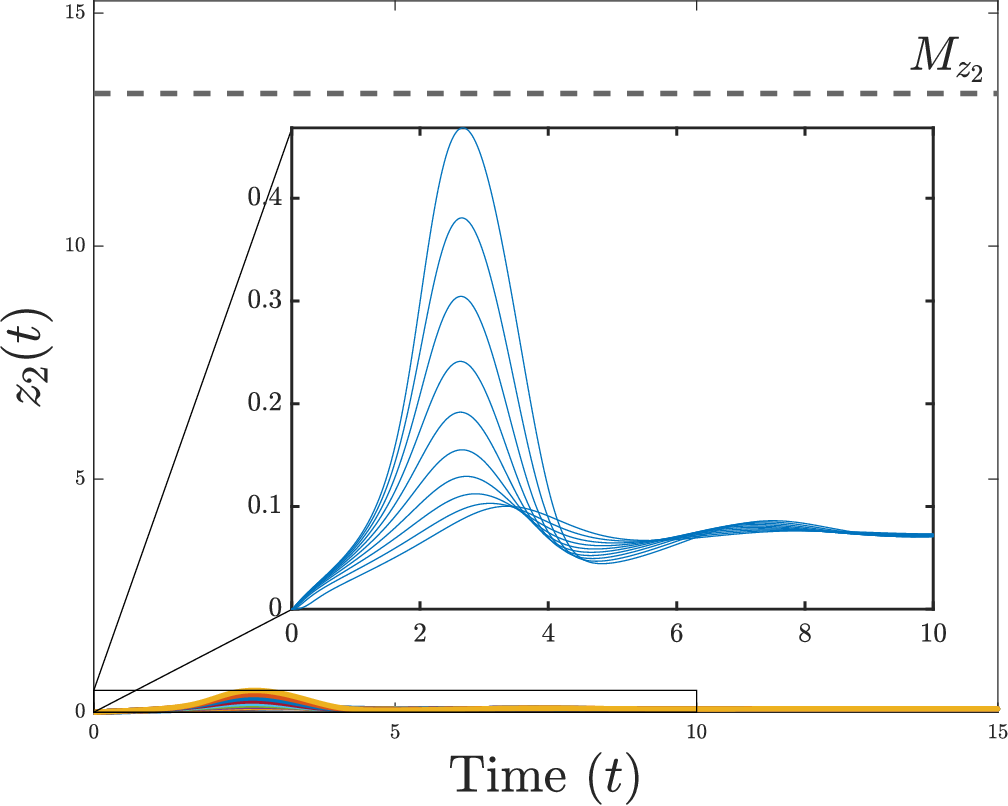}
    \label{fig:general_z2}
    }
    \caption{Numerical simulation for the general positive linear system with $n=12$.
    (a) Evolution of $z_1$ for ten initial conditions satisfying $0\le z_1(0)\le L_0$, together with the common theoretical bound $M_{z_1}$.
    (b) Evolution of $\|x(t)\|$ for the same initial conditions, together with the common bound $M_x$.
    (c) Evolution of $z_2$ for the same initial conditions, together with the common bound $M_{z_2}$.}
    \label{fig:general_numerical_results}
\end{figure}

\subsection{Positive linear system in $\mathbb{R}^2$}

As an example, consider the positive linear system in $\mathbb{R}^2$ given by
\eqref{eq:ca_dynamics}, with parameters $\alpha_i,\beta_i>0$ for
$i=1,\dots,4$. We choose the parameter values as in~\cite{AFC2016}, 
with $\alpha_2=\alpha_4=30$, $\beta_1=10,$
and $\alpha_1=\alpha_3=\beta_2=\beta_3=\beta_4=1$. The resulting system is
\begin{equation}\label{eq:ca_example}
    \begin{aligned}
    \dot z_1 &= 1-30 z_1 z_2,\\
    \dot z_2 &= x_2-30 z_1 z_2,
    \end{aligned}
    \qquad\text{and}\qquad
    \begin{aligned}
    \dot x_1 &= 10z_1-x_1,\\
    \dot x_2 &= x_1-x_2.
    \end{aligned}
\end{equation}
This system admits oscillatory solutions: see Figure~\ref{fig:numerical_results}(d) for a periodic orbit and Figure~\ref{fig:numerical_results}(b) for a solution approaching this periodic orbit when initialized at
$z_1(0)=z_2(0)=x_1(0)=x_2(0)=0$.
We compare the theoretical bounds with the simulated trajectories.

The computed threshold and waiting time are $L_0=1.5293$ and $T_0=1.3942,$
which yield
\begin{align*}
    \overline{M}_{z_1}&=L_0+\alpha_1T_0 \approx 2.9235 \\
    \overline{M}_{x_1}&=10\,\overline{M}_{z_1} \approx 29.2351 \\
    \overline{M}_{x_2}&=\overline{M}_{x_1}\approx 29.2351.
\end{align*}
Finally, since $K_* = \alpha_3 G/\alpha_4 = 1/3$, $c = 1$, and $d = 1$, we obtain
\[
    \overline{M}_{z_2}
    =K_*+c\overline{M}_{x_1}+d\overline{M}_{x_2}
    \approx 58.8036.
\]
Below are simulations from two different initial conditions,
\begin{align*}
    [z_1(0),\,z_2(0),\,x_1(0),\,x_2(0)]^\top&=[0,0,0,0]^\top \\
    [z_1(0),\,z_2(0),\,x_1(0),\,x_2(0)]^\top&=[0.043,\,0.756,\,0.332,\,0.407]^\top.
\end{align*}
We report only the bound $\overline{M}_{z_1}$ explicitly in
Figure~\ref{fig:numerical_results}(a) and (c), since the bounds
$\overline{M}_{x_1}$, $\overline{M}_{x_2}$, and $\overline{M}_{z_2}$
follow directly from $\overline{M}_{z_1}$ through the cascade comparison
arguments. In particular,
$\overline{M}_{x_1}$ and $\overline{M}_{x_2}$ scale proportionally with
$\overline{M}_{z_1}$, while $\overline{M}_{z_2}$ accumulates contributions
from the upstream states, making it the most conservative bound. This
explains why the theoretical bounds are considerably larger than the
observed trajectories.
\begin{figure}[h!]
    \centering
    \subfloat[]{
        \includegraphics[width=0.23\textwidth]{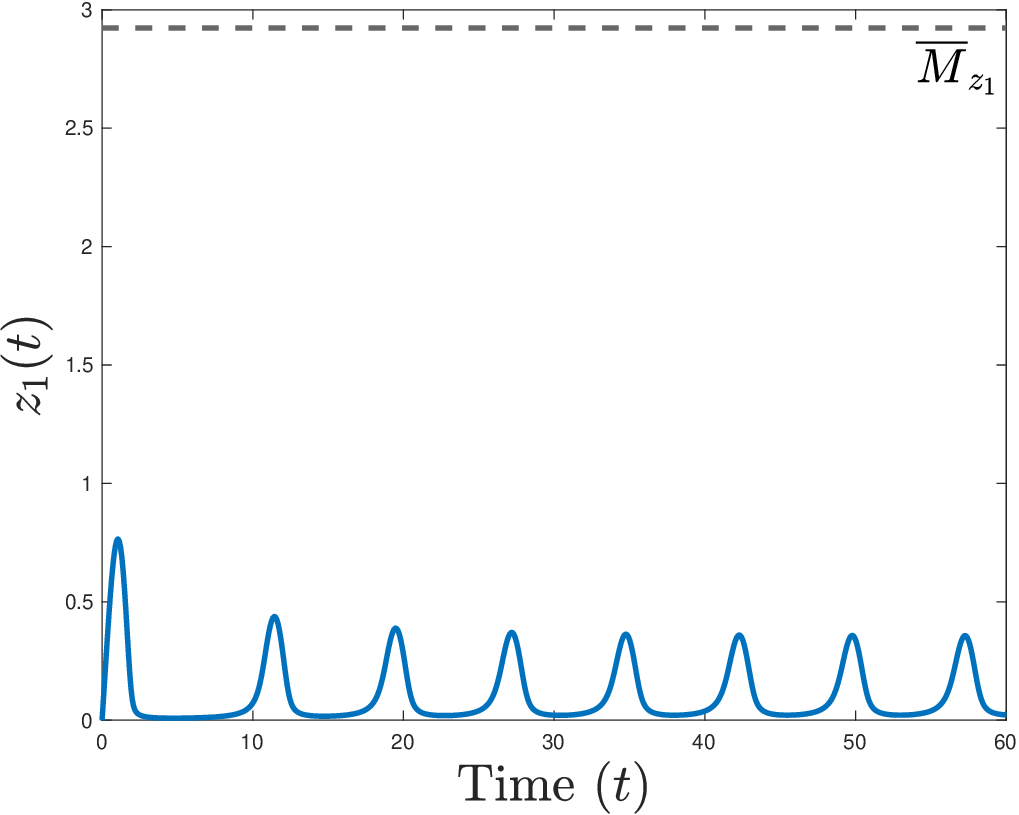}
        \label{fig:cbf_clf}
    }
    \subfloat[]{
        \includegraphics[width=0.23\textwidth]{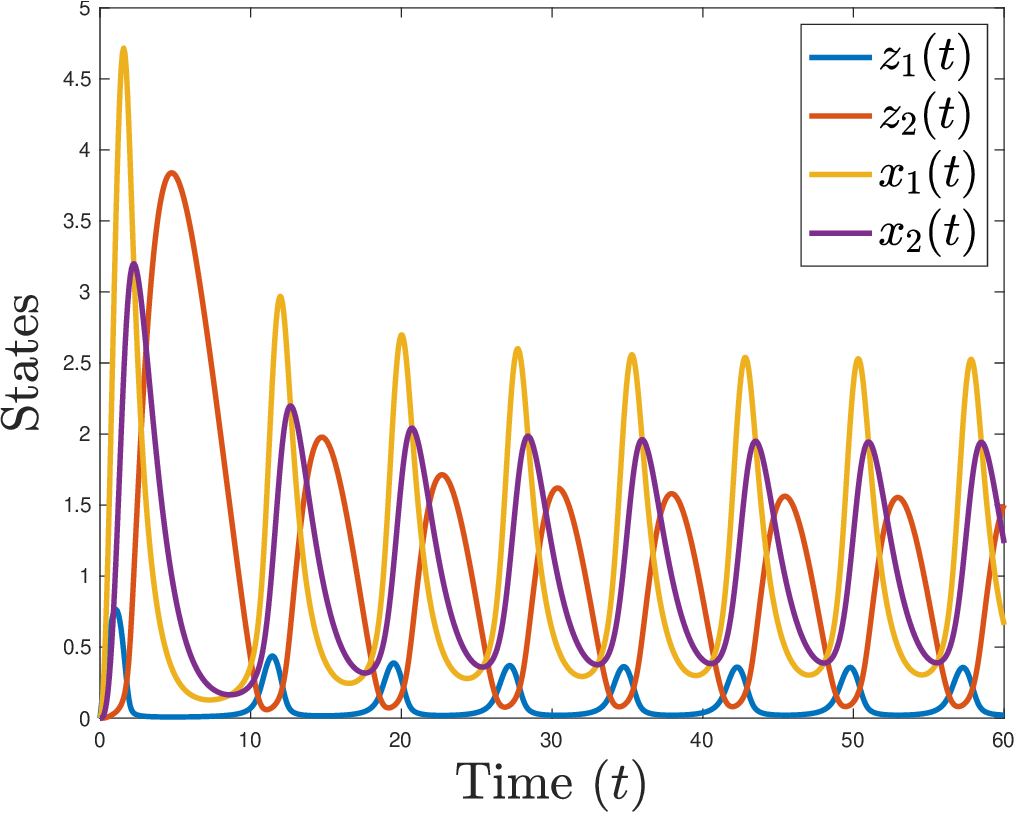}
        \label{fig:cbf_nom}
    }
    \subfloat[]{
        \includegraphics[width=0.23\textwidth]{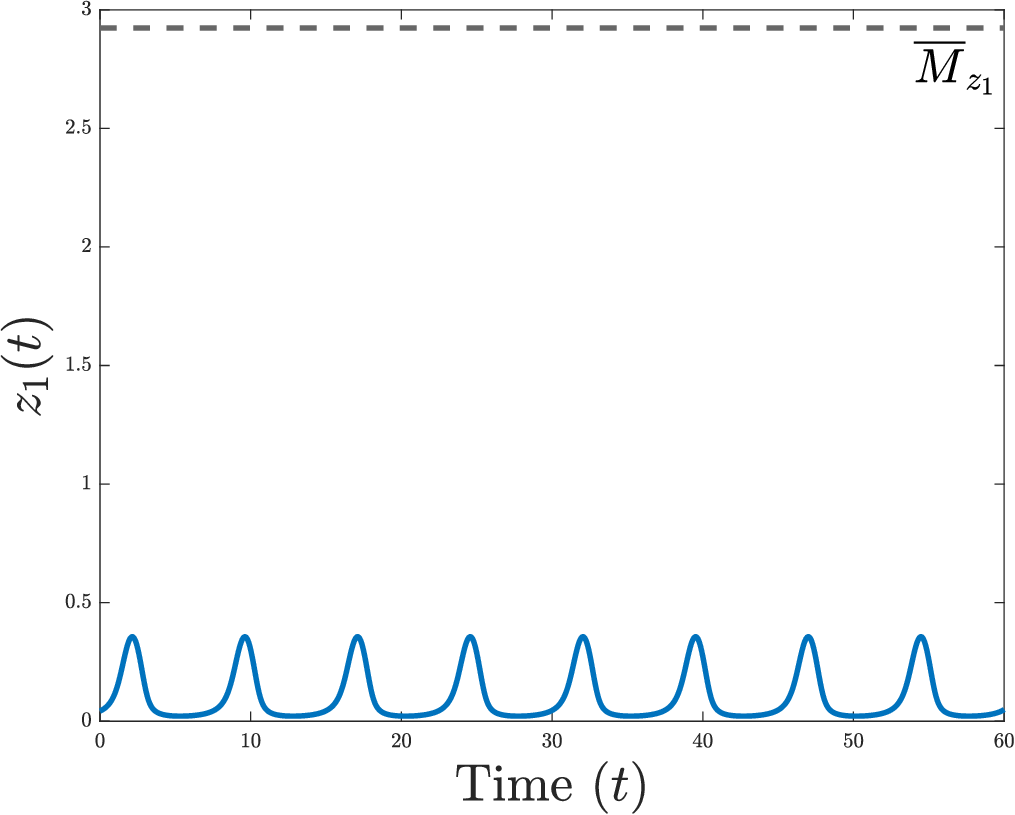}
        \label{fig:cbf_mpc}
    }
    \subfloat[]{
        \includegraphics[width=0.23\textwidth]{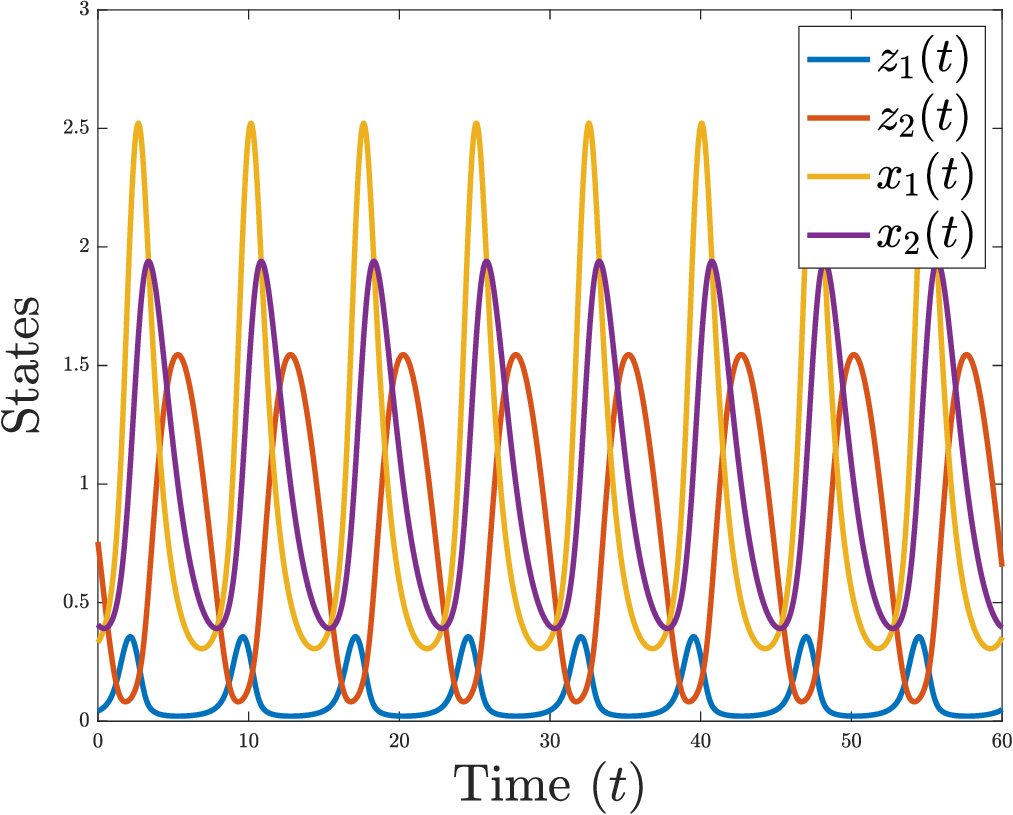}
        \label{fig:cbf_tubempc}
    }
    \caption{(a) Evolution of $z_1$ together with the theoretical bound $\overline{M}_{z_1}$ for the initial condition $[z_1(0),z_2(0),x_1(0),x_2(0)]^\top=[0,0,0,0]^\top$. 
    (b) Evolution of all states for the same initial condition.
    (c) Evolution of $z_1$ together with the theoretical bound $\overline{M}_{z_1}$ for the initial condition $[z_1(0),z_2(0),x_1(0),x_2(0)]^\top=[0.043,\,0.756,\,0.332,\,0.407]^\top$. 
    (d) Evolution of all states for the same initial condition.}
    \label{fig:numerical_results}
\end{figure}

\section{Conclusions}\label{sec:conclusion}

In this paper, we studied boundedness of a class of nonlinear feedback systems motivated by antithetic control. 
The key idea behind the boundedness proof was to view the dynamics of $z_1$ as a system subject to a negative feedback whose effective gain is determined by $z_2$. The crucial property is that whenever $z_1$ remains above a certain threshold $L_0$ for a sufficiently large time, the resulting growth of $z_2$ eventually generates a feedback action strong enough to overcome the positive drift of $z_1$. We may think of this mechanism as a form of \emph{persistently negative feedback}: a persistently large value of $z_1$ ultimately produces a corrective damping action that drives $z_1$ downward.

To establish this property, one must relate the behavior of $z_1$ to that of the intermediate subsystem described by the variables $x$, with input $u=z_1$ and output $y$. Intuitively, a persistently large input should produce a persistently large output, which in turn causes $z_2$ to increase and strengthen the negative feedback acting on $z_1$. A natural framework for formalizing this idea is provided by the theory of monotone control systems developed in \cite{monotoneTAC}. In particular, if for each constant input $u$ the subsystem admits a globally asymptotically stable equilibrium, yielding an input-state characteristic in the sense of that work, then persistent increases in the input are reflected in corresponding increases in the asymptotic state and output. Such a property can be established under a variety of technical assumptions. For simplicity, we restricted attention here to the case in which the intermediate subsystem is linear and positive, a special case of monotone systems. This restriction is motivated by our main application, where the $x$-subsystem is a two-dimensional positive linear system. 

Future work will extend the present analysis to broader classes of nonlinear systems and investigate the extent to which the boundedness mechanism identified here persists beyond the positive linear setting.
Other possible extensions include systems with time-varying parameters or disturbances, as well as studying convergence properties in addition to boundedness.

\appendix
\section{Technical results}\label{sec:technical}
\begin{proposition}\label{prop:T_fixedpoint}
    Let $\Delta>0$ and $\alpha_1,\alpha_4>0$. For each $\ell\in\mathbb{R}_+$, consider the fixed-point equation
    \begin{equation}\label{eq:T_fixedpoint}
        \tau(\ell) = \Delta+\frac{\ln(2)}{\alpha_4(\ell+\alpha_1\tau(\ell))}.
    \end{equation}
    Then, $\tau(\ell)$ satisfies:
    \begin{enumerate}
        \item[\emph{(i)}] the unique positive solution $\tau^* = \tau(\ell)$ exists;
        \item[\emph{(ii)}] $\tau(\ell)$ is continuous on $[0,\infty)$, differentiable on $(0,\infty)$, and strictly decreasing; moreover, the right derivative at $\ell=0$ exists and is negative;
        \item[\emph{(iii)}] $\tau(\ell)>\Delta$ for all $\ell\ge0$, and $\lim_{\ell\to\infty}\tau(\ell)=\Delta$.
        \item[\emph{(iv)}] the solution $\tau(\ell;\Delta)$ is strictly increasing in $\Delta$ for each fixed $\ell\ge0$.
    \end{enumerate}
\end{proposition}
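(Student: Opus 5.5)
The plan is to convert the fixed-point equation into a scalar root-finding problem and read every claimed property off that root. For $\ell\ge0$ and $\tau>0$, set
\[
F(\tau,\ell):=\tau-\Delta-\frac{\ln 2}{\alpha_4(\ell+\alpha_1\tau)} .
\]
Then $\tau(\ell)$ is exactly a positive zero of $F(\cdot,\ell)$.

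\textbf{Step (i): existence and uniqueness.} For fixed $\ell$, the map $\tau\mapsto F(\tau,\ell)$ is continuous on $(0,\infty)$. Its derivative is
\[
\partial_\tau F=1+\frac{\alpha_1\ln 2}{\alpha_4(\ell+\alpha_1\tau)^2}>0,
\]
so it is strictly increasing, which gives uniqueness.

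For existence I check the two ends of the interval:
\begin{itemize}
\item At $\tau=\Delta$ we have $F(\Delta,\ell)=-\ln2/(\alpha_4(\ell+\alpha_1\Delta))<0$.
\item As $\tau\to\infty$ we have $F\to\infty$.
\end{itemize}
The intermediate value theorem then gives a root, and strict monotonicity shows it lies in $(\Delta,\infty)$. The same sign check rules out roots in $(0,\Delta]$, because there $F<0$. Note that the case $\ell=0$ causes no singularity, since $\alpha_1\tau>0$. This also proves the first half of (iii), namely $\tau(\ell)>\Delta$.

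\textbf{Step (ii): regularity and monotonicity in $\ell$.} $F$ is $C^\infty$ on the open set $\{(\tau,\ell):\tau>0,\ \ell+\alpha_1\tau>0\}$, which contains $\ell$ slightly negative. Since $\partial_\tau F>0$, the implicit function theorem gives a $C^1$ branch through each point $(\tau(\ell),\ell)$ with $\ell\ge0$. By uniqueness this branch coincides with $\tau(\ell)$, so $\tau$ is differentiable on $(0,\infty)$, continuous on $[0,\infty)$, and has a one-sided derivative at $0$, which is in fact a two-sided derivative of the extension. The derivative is
\[
\tau'(\ell)=-\frac{\partial_\ell F}{\partial_\tau F}
=-\frac{\ln2/(\alpha_4(\ell+\alpha_1\tau)^2)}{1+\alpha_1\ln2/(\alpha_4(\ell+\alpha_1\tau)^2)}<0 .
\]
This gives strict decrease, and the right derivative at $0$ is negative.

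\textbf{Steps (iii) and (iv).} For the limit, combine $\tau(\ell)>\Delta$ with the equation itself:
\[
\Delta<\tau(\ell)\le \Delta+\frac{\ln 2}{\alpha_4\ell}\to\Delta .
\]
For (iv), apply the same implicit-function computation in the variable $\Delta$. Since $\partial_\Delta F=-1$,
\[
\partial_\Delta\tau=\frac{1}{\partial_\tau F}>0 .
\]
Alternatively, and avoiding differentiability, take $\Delta<\Delta'$. Then $F_{\Delta'}(\tau(\ell;\Delta),\ell)=\Delta-\Delta'<0$, so by monotonicity of $F_{\Delta'}$ in $\tau$ the root $\tau(\ell;\Delta')$ must be strictly larger.

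\textbf{Main obstacle.} The only delicate point is the regularity at the boundary $\ell=0$, since the domain is closed there. I will handle it as described in Step (ii): extend $F$ smoothly to a neighborhood with $\ell$ slightly negative, which is legitimate because $\ell+\alpha_1\tau>0$ there. The implicit function theorem then applies on an open set, and the claimed continuity and one-sided derivative at $0$ follow from the two-sided statement for the extension.
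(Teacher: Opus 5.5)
Your proof is correct, but it takes a different route from the paper's.

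\textbf{The paper's route.} It clears denominators to get the quadratic $q(\tau)=\alpha_1\tau^2+(\ell-\alpha_1\Delta)\tau-(\Delta\ell+\psi)$, with $\psi=\ln(2)/\alpha_4$. It then works from the explicit positive root $\tau^*=\bigl(\alpha_1\Delta-\ell+\sqrt{(\ell+\alpha_1\Delta)^2+4\alpha_1\psi}\bigr)/(2\alpha_1)$:
\begin{itemize}
\item continuity and differentiability are read off the formula;
\item $\tau'$ comes from implicitly differentiating $q$;
\item $\tau(\ell)\to\Delta$ comes from a Taylor expansion of the square root;
\item (iv) comes from differentiating in $\Delta$.
\end{itemize}

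\textbf{Your route.} Everything flows from the single fact $\partial_\tau F>0$.
\begin{itemize}
\item Uniqueness, existence via the intermediate value theorem, and the bound $\tau>\Delta$ all come from one sign check.
\item Regularity comes from the implicit function theorem. Your derivative agrees with the paper's $-\ln 2/\bigl(\alpha_4(\ell+\alpha_1\tau)^2+\alpha_1\ln 2\bigr)$.
\item The limit is a one-line squeeze, $\Delta<\tau(\ell)<\Delta+\ln 2/(\alpha_4\ell)$, which is simpler than the paper's asymptotic expansion.
\item Your derivative-free comparison for (iv) is cleaner than implicit differentiation in $\Delta$.
\end{itemize}

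\textbf{What each buys.} The paper's approach gives a closed form, which is what one actually uses to evaluate $T_0=\tau(L_0)$ numerically. It also makes your ``main obstacle'' disappear: the discriminant $(\ell+\alpha_1\Delta)^2+4\alpha_1\psi$ is always positive, so the formula is smooth in $\ell$ on all of $\mathbb{R}$. The right derivative at $0$ is then just the ordinary derivative, with no extension argument needed.

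Your approach does not rely on the equation reducing to a quadratic, so it would survive changes to the delay term. It also makes uniqueness fully explicit, where the paper only asserts that $q$ has exactly one positive root. The paper's implicit reason is that the product of the roots, $-(\Delta\ell+\psi)/\alpha_1$, is negative.

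\textbf{One detail to add.} In Step (ii), state that the implicit-function branch through $(\tau(\ell_0),\ell_0)$ stays positive nearby. This holds by continuity, since $\tau(\ell_0)>\Delta>0$, and it is what lets uniqueness identify that branch with $\tau$.
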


\begin{proof}
    The uniqueness can be easily verified by noticing that the LHS of the equation above is the line with unit slope, while the RHS is a shifted hyperbola, implying they must cross exactly once for $\ell\ge0$. More precisely,
    setting $\psi := \ln(2)/\alpha_4$, 
    the fixed-point equation becomes $\tau(\ell) = \Delta + \psi/(\ell + \alpha_1 \tau(\ell))$. Equivalently $(\tau(\ell)-\Delta)(\ell+\alpha_1\tau(\ell))=\psi$ or
    $q(\tau(\ell)):=\alpha_1\tau(\ell)^2+(\ell-\alpha_1\Delta)\tau(\ell)-(\Delta\ell+\psi)=0$,
    which has exactly one positive root
    \begin{equation}\label{eq:T_root}
        \tau^*=\frac{\alpha_1\Delta-\ell+\sqrt{(\ell+\alpha_1\Delta)^2+4\alpha_1\psi}}{2\alpha_1}.
     \end{equation}
    Hence $\tau^* = \tau(\ell)$ is well defined, which proves (i). The fixed-point equation also gives
    \begin{equation*}
        \tau(\ell)-\Delta = \frac{\ln(2)}{\alpha_4(\ell+\alpha_1\tau(\ell))} > 0
        \qquad\text{for every } \ell\ge0.
    \end{equation*}
    The explicit formula \eqref{eq:T_root} shows that $\tau$ is continuous on $[0,\infty)$ and differentiable on $(0,\infty)$. Moreover, the derivative extends continuously to $\ell=0$, where it equals the right derivative.
    Differentiating $q(\tau(\ell))$ with respect to $\ell$ yields
    \begin{equation*}
        2\alpha_1\tau(\ell)\tau'(\ell)+\tau(\ell)+(\ell-\alpha_1\Delta)\tau'(\ell)-\Delta=0,
    \end{equation*}
    from which it follows that
    \begin{equation*}
        \tau'(\ell)=\frac{\Delta-\tau(\ell)}{2\alpha_1\tau(\ell)+\ell-\alpha_1\Delta} 
        = -\frac{\ln(2)}{\alpha_4\left(\ell+\alpha_1\tau(\ell)\right)^2+\alpha_1\ln(2)}<0.
    \end{equation*}
    The formula is valid for all $\ell>0$ and extends continuously to $\ell=0$, where it gives the right derivative.
    Since $\tau(\ell)>\Delta$, it follows that $\tau'(\ell)<0$. Therefore $\tau(\ell)$ is strictly decreasing, proving (ii). Furthermore, using the expansion
    \footnote{Let $a:=\ell+\alpha_1\Delta$. Then $\sqrt{a^2+4\alpha_1\psi}=a\sqrt{1+4\alpha_1\psi/a^2}.$ Since $a\to\infty$ as $\ell\to\infty$, the quantity $x:=4\alpha_1\psi/a^2$ satisfies $x\to0$. Using the Taylor expansion $\sqrt{1+x}=1+x/2+o(x)$ as $x\to0$, we obtain
    $\sqrt{a^2+4\alpha_1\psi}=a+2\alpha_1\psi/a+o\!\left(1/a\right).$
    Substituting back $a=\ell+\alpha_1\Delta$ yields the stated expansion.}
    \begin{equation*}
        \sqrt{(\ell+\alpha_1\Delta)^2+4\alpha_1\psi}
        =\ell+\alpha_1\Delta+\frac{2\alpha_1\psi}{\ell+\alpha_1\Delta}
        +o\!\left(\frac1\ell\right), \qquad \ell\to\infty,
    \end{equation*}
    and substituting this into the expression for $\tau(\ell)$ gives, as $\ell\to\infty$, $\lim_{\ell\to\infty}\tau(\ell)=\Delta$.
    Together with the inequality $\tau(\ell)>\Delta$ proved above, this proves point (iii).

    Finally, to prove (iv), view the solution as $\tau=\tau(\ell;\Delta)$ and 
    differentiate the fixed-point equation with respect to $\Delta$. This gives
    \begin{equation*}
        \frac{\partial \tau}{\partial \Delta}
        =
        1-
        \frac{\alpha_1\ln(2)}{\alpha_4(\ell+\alpha_1\tau)^2}
        \frac{\partial \tau}{\partial \Delta}
        =
        \frac{1}{
        1+\dfrac{\alpha_1\ln(2)}
        {\alpha_4(\ell+\alpha_1\tau)^2}}
        >0.
    \end{equation*}
    Therefore, for each fixed $\ell\ge0$, $\tau(\ell;\Delta)$ is strictly increasing in $\Delta$, proving (iv).
\end{proof}

\begin{proposition}\label{prop:ell4_monotone}
Let $\tau(\ell):\mathbb{R}_+\to\mathbb{R}_+$ be defined as in 
\eqref{eq:T_fixedpoint}, $\alpha_3>0$ and $G\coloneqq-c^\top A^{-1}b>0$. For each $\ell\in\mathbb{R}_+$, define
\begin{equation*}
    \rho_z(\ell,\tau(\ell))
    := \frac{\alpha_3 G\ell}{4\alpha_4(\ell+\alpha_1\tau(\ell))}.
\end{equation*}
Then $\rho_z:\mathbb{R}_+\to\mathbb{R}_+$ is strictly increasing with 
$\lim_{\ell\to\infty}\rho_z(\ell,\tau(\ell))=\alpha_3G/(4\alpha_4)$.
\end{proposition}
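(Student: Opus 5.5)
Write $r(\ell):=\rho_z(\ell,\tau(\ell))=\frac{\alpha_3 G}{4\alpha_4}\,h(\ell)$ with
\[
h(\ell):=\frac{\ell}{\ell+\alpha_1\tau(\ell)}.
\]
Since $\alpha_3G/(4\alpha_4)>0$ is a constant, it suffices to show that $h$ is strictly increasing on $\mathbb{R}_+$ and that $h(\ell)\to1$ as $\ell\to\infty$. The first step is to note that $h$ is well defined and nonnegative. By Proposition~\ref{prop:T_fixedpoint}(iii) we have $\tau(\ell)>\Delta>0$, so $\ell+\alpha_1\tau(\ell)>0$ for all $\ell\ge0$. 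Hence $h$ maps $\mathbb{R}_+$ into $[0,1)$, and $h(0)=0$.

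For monotonicity, the cleanest route avoids computing $h'$ altogether. Write
\[
\frac{1}{h(\ell)}=1+\frac{\alpha_1\tau(\ell)}{\ell}, \qquad \ell>0.
\]
On $(0,\infty)$, $\tau(\ell)$ is positive and strictly decreasing by Proposition~\ref{prop:T_fixedpoint}(ii), and $1/\ell$ is positive and strictly decreasing. Their product $\tau(\ell)/\ell$ is therefore strictly decreasing. It follows that $1/h$ is strictly decreasing, so $h$ is strictly increasing on $(0,\infty)$.

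To include $\ell=0$, note that $h(0)=0<h(\ell)$ for every $\ell>0$. This gives strict monotonicity on all of $\mathbb{R}_+$. Continuity of $h$ at $0$ also holds, by continuity of $\tau$ from Proposition~\ref{prop:T_fixedpoint}(ii), though it is not needed for monotonicity.

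As a cross-check, one can differentiate directly:
\[
h'(\ell)=\frac{\alpha_1\bigl(\tau(\ell)-\ell\tau'(\ell)\bigr)}{(\ell+\alpha_1\tau(\ell))^2}.
\]
This is positive because $\tau>0$ and $\tau'<0$.

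For the limit, use Proposition~\ref{prop:T_fixedpoint}(iii): $\tau(\ell)\to\Delta$, so $\tau$ is bounded on $[1,\infty)$. Then $\alpha_1\tau(\ell)/\ell\to0$, hence $h(\ell)\to1$, and so $r(\ell)\to\alpha_3G/(4\alpha_4)$.

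No step is a real obstacle. The only point needing care is the endpoint $\ell=0$, where the quotient form $\tau(\ell)/\ell$ is undefined; it is handled by the separate observation $h(0)=0$. The statement also writes $\rho_z:\mathbb{R}_+\to\mathbb{R}_+$ as a function of $\ell$ alone. I would make explicit that this means the composite map $\ell\mapsto\rho_z(\ell,\tau(\ell))$, in contrast to the fixed-$T_0$ map $\ell\mapsto\ell\rho_z(\ell,T_0)$ used in Theorem~\ref{thm:GenLin-HasProperty}.
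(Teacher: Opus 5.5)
Your proof is correct. Your reduction to $h(\ell)=\ell/(\ell+\alpha_1\tau(\ell))$ is the same one the paper uses (it calls this function $\phi$), and your limit argument via $\tau(\ell)\to\Delta$ is the same as well. Your ``cross-check'' $h'(\ell)=\alpha_1(\tau(\ell)-\ell\tau'(\ell))/(\ell+\alpha_1\tau(\ell))^2>0$ is precisely the paper's monotonicity argument.

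Your primary monotonicity route is genuinely different. You write $1/h(\ell)=1+\alpha_1\tau(\ell)/\ell$ and note that $\tau(\ell)/\ell$ is a product of two positive decreasing functions on $(0,\infty)$. This avoids derivatives entirely. It needs only $\tau>0$ and $\tau$ nonincreasing; strict decrease of $1/\ell$ already gives strictness. It does not use the differentiability of $\tau$ or the right derivative at $\ell=0$ from Proposition~\ref{prop:T_fixedpoint}(ii). The cost is that $\ell=0$ must be treated separately, which you do correctly via $h(0)=0<h(\ell)$. The paper's derivative computation instead gives a single sign condition on all of $[0,\infty)$, using the right derivative at $0$.

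Your closing remark is also apt. It separates the composite map $\ell\mapsto\rho_z(\ell,\tau(\ell))$, which this proposition concerns, from the fixed-$T_0$ map $\ell\mapsto\ell\rho_z(\ell,T_0)$ invoked in Theorem~\ref{thm:GenLin-HasProperty}. The paper relies on both monotonicity facts in different places.
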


\begin{proof}
Denote
$\alpha \coloneqq \alpha_3 G/(4\alpha_4) > 0$. From the definition of $\rho_z(\ell,\tau(\ell))$, it suffices to study the function
\begin{equation*}
    \phi(\ell):=\frac{\ell}{\ell+\alpha_1\tau(\ell)},
\end{equation*}
since $\rho_z(\ell,\tau(\ell))=\alpha\phi(\ell)$. 
Differentiating, we obtain
\begin{equation*}
    \phi'(\ell)
    =\frac{(\ell+\alpha_1\tau(\ell)) - \ell(1+\alpha_1\tau'(\ell))}
          {(\ell+\alpha_1\tau(\ell))^2}
    =\frac{\alpha_1\bigl(\tau(\ell)-\ell\tau'(\ell)\bigr)}
          {(\ell+\alpha_1\tau(\ell))^2}.
\end{equation*}
By Proposition~\ref{prop:T_fixedpoint}, $\tau(\ell)>0$ and $\tau'(\ell)<0$ 
for all $\ell\ge0$, hence $\tau(\ell)-\ell\tau'(\ell)>0$, and therefore 
$\phi'(\ell)>0$ for all $\ell\ge0$. Thus $\phi(\ell)$, and hence 
$\rho_z(\ell,\tau(\ell))$, is strictly increasing on $[0,\infty)$.
Moreover, since Proposition~\ref{prop:T_fixedpoint} gives 
$\lim_{\ell\to\infty}\tau(\ell)=\Delta<\infty$, it follows that
$\lim_{\ell\to\infty}\phi(\ell)=1$, and hence $\lim_{\ell\to\infty}\rho_z(\ell,\tau(\ell))=\alpha_3G/(4\alpha_4)$.
\end{proof}

\begin{proposition}\label{prop:Lell4_threshold}
Let $\rho_z(\ell,\tau(\ell))$ be defined as in Proposition~\ref{prop:ell4_monotone}. Then the map 
$\ell\mapsto \ell\rho_z(\ell,\tau(\ell))$ is strictly increasing 
on $[0,\infty)$ and satisfies
\begin{equation*}
    \lim_{\ell\to\infty}\ell\rho_z(\ell,\tau(\ell))=\infty.
\end{equation*}
Consequently, for every $a>0$, there exists a unique $\ell^*>0$ such that 
$\ell^*\rho_z(\ell^*,\tau(\ell^*))=a$.
\end{proposition}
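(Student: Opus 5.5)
The plan is to write the map as a product of two factors and reduce everything to Proposition~\ref{prop:ell4_monotone}. Set
\[
\Phi(\ell):=\ell\,\rho_z(\ell,\tau(\ell))=\ell\cdot\alpha\,\phi(\ell),
\qquad
\alpha=\frac{\alpha_3G}{4\alpha_4}>0,\qquad
\phi(\ell)=\frac{\ell}{\ell+\alpha_1\tau(\ell)}.
\]
We have $\Phi(0)=0$. Since $\tau$ is continuous on $[0,\infty)$ and $\tau(\ell)>\Delta>0$ by Proposition~\ref{prop:T_fixedpoint}(iii), the denominator $\ell+\alpha_1\tau(\ell)$ never vanishes, so $\Phi$ is continuous on $[0,\infty)$.

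\emph{Strict monotonicity.} For $0\le \ell_1<\ell_2$, Proposition~\ref{prop:ell4_monotone} gives $0\le\rho_z(\ell_1,\tau(\ell_1))<\rho_z(\ell_2,\tau(\ell_2))$. Together with $0\le \ell_1<\ell_2$, this yields
\[
\ell_1\rho_z(\ell_1,\tau(\ell_1))\le \ell_2\rho_z(\ell_1,\tau(\ell_1))<\ell_2\rho_z(\ell_2,\tau(\ell_2)).
\]
The first inequality uses only $\rho_z\ge0$, and the second uses $\ell_2>0$. So $\Phi$ is strictly increasing on $[0,\infty)$. Alternatively one can differentiate: $\Phi'=\alpha(\phi+\ell\phi')>0$ for $\ell>0$, using $\phi'>0$ from the previous proof.

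\emph{Divergence.} By Proposition~\ref{prop:ell4_monotone}, $\rho_z(\ell,\tau(\ell))\to\alpha>0$, so there is $\ell_1$ with $\rho_z\ge\alpha/2$ for all $\ell\ge\ell_1$. Then $\Phi(\ell)\ge\alpha\ell/2\to\infty$.

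\emph{Existence and uniqueness of $\ell^*$.} Given $a>0$, we have $\Phi(0)=0<a$ and $\Phi(\ell)>a$ for $\ell$ large, so the intermediate value theorem applied to the continuous $\Phi$ gives some $\ell^*>0$ with $\Phi(\ell^*)=a$. Uniqueness follows from strict monotonicity.

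The only delicate point, a mild one, is the endpoint $\ell=0$, where the product argument needs the non-strict step $\ell_1\rho_z\le\ell_2\rho_z$ rather than strictness of both factors. Continuity of $\tau$ at $0$ is supplied by Proposition~\ref{prop:T_fixedpoint}(ii).
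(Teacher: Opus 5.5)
Your proof is correct and follows essentially the same route as the paper: strict monotonicity of the product from nonnegativity and strict monotonicity of $\rho_z$ (Proposition~\ref{prop:ell4_monotone}), divergence from the positive limit $\alpha_3G/(4\alpha_4)$, then the intermediate value theorem for existence and strict monotonicity for uniqueness of $\ell^*$. You also make explicit two steps the paper leaves implicit: the split of the product inequality into a non-strict and a strict step (needed when $\ell_1=0$), and the continuity of $\Phi$, which follows from $\tau(\ell)>\Delta>0$.
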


\begin{proof}
By Proposition~\ref{prop:ell4_monotone}, $\rho_z(\ell,\tau(\ell))\ge0$ 
for all $\ell\ge0$ and $\rho_z$ is strictly increasing, so that 
$\ell\mapsto\ell\rho_z(\ell,\tau(\ell))$ is strictly increasing 
on $[0,\infty)$. Moreover, Proposition~\ref{prop:ell4_monotone} gives 
$\lim_{\ell\to\infty}\rho_z(\ell,\tau(\ell))=\alpha_3G/(4\alpha_4)>0$, 
and it follows that $\lim_{\ell\to\infty}\ell\rho_z(\ell,\tau(\ell))
=\infty$. Since $\ell\mapsto\ell\rho_z(\ell,\tau(\ell))$ is 
continuous, strictly increasing, and satisfies
\begin{equation*}
    \lim_{\ell\to 0^+}\ell\rho_z(\ell,\tau(\ell))=0,
    \qquad
    \lim_{\ell\to\infty}\ell\rho_z(\ell,\tau(\ell))=\infty,
\end{equation*}
the intermediate value theorem implies that for every $a>0$ there exists 
a unique $\ell^*>0$ such that $\ell^*\rho_z(\ell^*,\tau(\ell^*))=a$.
Taking $a=\alpha_1/\alpha_2$ gives the desired threshold $L_0$.
\end{proof}

\begin{lemma}\label{lem:p_above_theta}
    Let $\theta:=\alpha_1/\alpha_2$. Consider the dynamics of $z_1(t)$ and $z_2(t)$ in \eqref{eq:gen_dynamics}. Suppose that $z_1(t)\ge L>L_0$ for all $t\in[0,T]$ with $T\ge T_0$. Assume, in addition, that $p(T_0)>\theta$. Then $p(t)>\theta$ for all $t\in[T_0,T]$.
\end{lemma}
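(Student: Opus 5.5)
The plan is a first-crossing argument on the product $p(t)=z_1(t)z_2(t)$. At a crossing of the level $\theta$ the $\dot z_1$-contribution to $\dot p$ vanishes. What remains is controlled by the lower bound \eqref{eq:y_low} on $y$, which holds throughout $[\Delta,T]$.

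\textbf{Step 1 (contradiction setup).} Suppose the claim fails, and let
\[
t_0:=\inf\{t\in[T_0,T]: p(t)\le\theta\}.
\]
Since $p(T_0)>\theta$ and $p$ is continuous, we have $t_0\in(T_0,T]$, $p(t_0)=\theta$, and $p(t)>\theta$ on $[T_0,t_0)$. The same reasoning as in Lemma~\ref{lem:first_hitting}, applied to a decreasing crossing, then gives $\dot p(t_0)\le 0$.

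\textbf{Step 2 (compute $\dot p$ at the crossing).} From \eqref{eq:gen_dynamics},
\[
\dot p = \dot z_1 z_2 + z_1\dot z_2 = (\alpha_1-\alpha_2 p)\,z_2 + z_1(\alpha_3 y-\alpha_4 p).
\]
At $t_0$ we have $\alpha_1-\alpha_2 p(t_0)=0$, so
\[
\dot p(t_0)=z_1(t_0)\bigl(\alpha_3 y(t_0)-\alpha_4\theta\bigr).
\]
Because $z_1\ge L$ on $[0,T]$ and $t_0\in[\Delta,T]$ (recall $T_0>\Delta$ by Proposition~\ref{prop:T_fixedpoint}(iii)), estimate \eqref{eq:y_low} gives $y(t_0)\ge GL/2$. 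Using $z_1(t_0)\ge L>0$ as well, we get
\[
\dot p(t_0)\ge L\Bigl(\tfrac{\alpha_3 G L}{2}-\alpha_4\theta\Bigr).
\]
It therefore suffices to show that $L>L^*:=2\alpha_4\theta/(\alpha_3 G)$. Since $L>L_0$, this reduces to proving $L_0\ge L^*$.

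\textbf{Step 3 (compare $L_0$ with $L^*$, the main point).} The map $\ell\mapsto \ell\rho_z(\ell,\tau(\ell))$ is strictly increasing by Proposition~\ref{prop:Lell4_threshold}. Moreover $L_0$ is the point where this map equals $\theta$, by \eqref{eq:L0_def}. So it suffices to show that the map is below $\theta$ at $L^*$. Since $\alpha_1\tau(L^*)>0$,
\[
L^*\rho_z(L^*,\tau(L^*))=\frac{\alpha_3 G (L^*)^2}{4\alpha_4(L^*+\alpha_1\tau(L^*))}<\frac{\alpha_3 G L^*}{4\alpha_4}=\frac{\theta}{2}<\theta.
\]
Hence $L^*<L_0<L$, which gives $\alpha_3 GL/2-\alpha_4\theta>0$ and so $\dot p(t_0)>0$. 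This contradicts Step 1, and therefore $p(t)>\theta$ on all of $[T_0,T]$.

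The only delicate point is the bookkeeping in Step 2. One must check that the bound on $y$ remains valid beyond $T_0$: it needs only $z_1\ge L$ on $[0,t]$ and $t\ge\Delta$, and unlike the bound on $z_2$ it does not use the upper bound $U(L,T_0)$. One must also make sure that the first-crossing time is correctly characterized by $\dot p(t_0)\le 0$.
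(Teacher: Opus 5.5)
Your proposal is correct and follows essentially the same route as the paper. It takes a first-crossing time $t_0$ with $\dot p(t_0)\le 0$, reduces to $\dot p(t_0)=z_1(t_0)\bigl(\alpha_3 y(t_0)-\alpha_4\theta\bigr)$ since $\dot z_1(t_0)=0$, bounds $y(t_0)\ge GL/2$ via \eqref{eq:y_low}, and proves $L_0>L^*$ from $L^*\rho_z(L^*,\tau(L^*))<\theta/2$ and strict monotonicity of $\ell\mapsto\ell\rho_z(\ell,\tau(\ell))$; one nit is that your intermediate bound $\dot p(t_0)\ge L\bigl(\tfrac{\alpha_3GL}{2}-\alpha_4\theta\bigr)$ uses $z_1(t_0)\ge L$ and so needs the bracket to be nonnegative, which Step~3 then supplies, so nothing breaks.
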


\begin{proof}[Proof of Lemma~\ref{lem:p_above_theta}]
    Recall that $\theta:=\alpha_1/\alpha_2$. We prove that $p(t)>\theta$ for all $t\in[T_0,T]$
    by contradiction. Suppose instead that there exists $t\in(T_0,T]$ such that $p(t)\le\theta.$
    Since $p(T_0)>\theta$, the
    intermediate value theorem implies that there exists $\bar{t}\in(T_0,t]$ such that $p(\bar{t})=\theta$.
    
    Let $t_0$ denote the first time at which this occurs, i.e., $t_0:=\min\{s\in(T_0,T]:p(s)=\theta\}.$
    Then $p(t)>\theta$ for all $t\in[T_0,t_0)$ and $p(t_0)=\theta$. Hence, the first--crossing argument in Lemma~\ref{lem:first_hitting} implies that
    \begin{equation*}
        \dot p(t_0)\le 0.
    \end{equation*}
    However, notice that since $p(t_0)=\theta$ by definition of $t_0$, $\dot z_1(t_0)=\alpha_1-\alpha_2\theta=0$, which implies that 
    \begin{align*}
        \dot p(t_0) = z_1(t_0)\dot z_2(t_0) = \bigl[\alpha_3 y(t_0)-\alpha_4\theta\bigr]z_1(t_0).
    \end{align*}
    Since $z_1(t_0) \ge L > 0$, the sign of $\dot p(t_0)$ is determined by the term in parentheses. Since $t_0>T_0>\Delta$, we have $y(t_0)\ge\rho_y(L)$ from \eqref{eq:y_low}, and since $\rho_y(L)=GL/2$ is increasing in $L$ and $L>L_0$,
    \begin{equation*}
        \alpha_3 y(t_0)-\alpha_4\theta 
        \ge \alpha_3\rho_y(L)-\alpha_4\theta
        > \alpha_3\rho_y(L_0)-\alpha_4\theta.
    \end{equation*}
    It remains to show that $L_0$ satisfies $\alpha_3 GL_0/2>\alpha_4\theta$, 
    or equivalently, 
    \begin{equation*}
        L_0>L^*:=2\alpha_4\theta/(\alpha_3 G).
    \end{equation*}
    Notice that $L_0\rho_z(L_0,T_0) = \theta$
    by definition of $L_0$ in \eqref{eq:L0_def}. Since $\ell\rho_z(\ell,\tau(\ell))$ is strictly 
    increasing in $\ell$ (Proposition~\ref{prop:Lell4_threshold}), it suffices to show that
    $L^*\rho_z(L^*,\tau(L^*))<\theta$, which would imply $L_0>L^*$. Indeed,
    \begin{align*}
        L^*\rho_z(L^*,\tau(L^*))
        &= \frac{\alpha_3 G(L^*)^2}{4\alpha_4\bigl(L^*+\alpha_1\tau(L^*)\bigr)}
        < \frac{\alpha_3 G(L^*)^2}{4\alpha_4 L^*}
        = \frac{\alpha_3 G L^*}{4\alpha_4}
        = \frac{\theta}{2}
        < \theta,
    \end{align*}
    where we used $\tau(L^*)>0$ in the first inequality and substituted 
    $L^*=2\alpha_4\theta/(\alpha_3 G)$ in the last equality.
    Hence $L_0>L^*$, so $\alpha_3GL_0/2>\alpha_4\theta$, and 
    therefore
    \begin{equation*}
        \dot p(t_0)=z_1(t_0)\left[\alpha_3 y(t_0)-\alpha_4\theta\right]>0,
    \end{equation*}
    contradicting $\dot p(t_0)\le 0$.
\end{proof}

\begin{lemma}[First hitting-time]\label{lem:first_hitting}
Let $f:[0,\infty)\to\mathbb{R}$ be continuous and differentiable, and let
$Q\in\mathbb{R}$. Suppose that the set $\{t\ge 0:f(t)=Q\}$ is nonempty and define $t^\ast:=\min\{t\ge 0:f(t)=Q\}.$
\begin{enumerate}
    \item If $f(0)<Q$, then $f(t)\le Q$ for all $t\in[0,t^\ast],$ and $\dot f(t^\ast)\ge 0.$
    Moreover, if $f$ crosses $Q$ transversally from below at $t^\ast$, i.e., $\dot f(t^\ast)\neq 0$, then $\dot f(t^\ast)>0$.
    
    \item If $f(0)>Q$, then $f(t)\ge Q$ for all $t\in[0,t^\ast],$ and $\dot f(t^\ast)\le 0.$
    Moreover, if $f$ crosses $Q$ transversally from above at $t^\ast$, i.e., $\dot f(t^\ast)\neq 0$, then $\dot f(t^\ast)<0.$
\end{enumerate}
\end{lemma}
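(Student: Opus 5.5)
The two parts are mirror images, so I will prove part 1 directly and obtain part 2 by applying part 1 to $-f$ with level $-Q$. Throughout, $t^\ast=\min\{t\ge0: f(t)=Q\}$ is well defined: the set $\{t\ge 0: f(t)=Q\}$ is nonempty and closed by continuity of $f$, and it is bounded below, so its infimum is attained. Since $f(0)<Q$, we also have $t^\ast>0$.

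\emph{Sign on $[0,t^\ast]$.} Suppose $f(s)>Q$ for some $s\in(0,t^\ast)$. Since $f(0)<Q<f(s)$, the intermediate value theorem gives some $r\in(0,s)$ with $f(r)=Q$. This contradicts the minimality of $t^\ast$. Hence $f(t)\le Q$ on $[0,t^\ast)$, and $f(t^\ast)=Q$ at the endpoint. In fact $f(t)<Q$ strictly on $[0,t^\ast)$, because equality at an earlier time would again contradict minimality.

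\emph{Sign of the derivative.} Since $t^\ast>0$, the left difference quotient is available. For $h\in(0,t^\ast)$,
\[
\frac{f(t^\ast)-f(t^\ast-h)}{h}=\frac{Q-f(t^\ast-h)}{h}>0 .
\]
Because $f$ is differentiable at $t^\ast$, the two-sided derivative equals the limit of this left quotient as $h\to0^+$. A limit of positive numbers is nonnegative, so $\dot f(t^\ast)\ge 0$. The transversality clause then follows at once: if in addition $\dot f(t^\ast)\neq0$, then $\dot f(t^\ast)>0$.

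\emph{Part 2 and the main subtlety.} Set $g:=-f$ and apply part 1 to $g$ with level $-Q$. Then $g(0)<-Q$, and $g$ has the same first hitting time $t^\ast$. Part 1 gives $g\le -Q$ on $[0,t^\ast]$ and $\dot g(t^\ast)\ge0$. Translating back yields $f\ge Q$ on $[0,t^\ast]$ and $\dot f(t^\ast)\le 0$, with strict inequality $\dot f(t^\ast)<0$ under transversality. The only delicate point is justifying that $t^\ast>0$, so that the left quotient is defined. This is exactly where the hypothesis $f(0)\neq Q$ is used, and it is the step I would state explicitly. The rest is routine.
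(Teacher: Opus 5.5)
Your proof is correct and follows the paper's argument. You first get $f<Q$ on $[0,t^\ast)$ from minimality of $t^\ast$, then pass to the limit in the left difference quotient at $t^\ast$ to obtain $\dot f(t^\ast)\ge 0$, and handle part 2 by symmetry. You are slightly more careful than the paper: you justify that the minimum is attained and that $t^\ast>0$, use the intermediate value theorem for the strict inequality before $t^\ast$, and make the symmetric case explicit via $g=-f$.
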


\begin{proof}
We prove the case $f(0)<Q$; the case $f(0)>Q$ is analogous with the inequalities reversed.
Since $t^\ast$ is the first time at which $f$ reaches the level $Q$ and
$f(0)<Q$, we have $f(t)<Q$ for all $t\in[0,t^\ast)$ and $f(t^\ast)=Q.$
Hence $f(t)\le Q$ for all $t\in[0,t^\ast]$.

For $h<0$ sufficiently small, we have $t^\ast+h<t^\ast$, and so $ f(t^\ast+h)<Q=f(t^\ast).$
Therefore, $\frac{1}{h}[f(t^\ast+h)-f(t^\ast)] \ge 0$. Passing to the limit as $h\to 0^-$ gives $\dot f(t^\ast)\ge 0.$
If, in addition, $f$ crosses $Q$ transversally from below at $t^\ast$, then
$\dot f(t^\ast)\neq 0$. Since $\dot f(t^\ast)\ge 0$, it follows that $\dot f(t^\ast)>0.$
\end{proof}

\section{Additional technical proofs}

\begin{proof}[Proof of Lemma~\ref{lem:GenLin-OctInv}]\label{pr:lemma}

    To prove invariance of the positive orthant $\mathcal{C}$, we invoke the Bony-Brezis Theorem, which states that a closed set is forward invariant under a vector field $f(\cdot)$ (i.e. solutions of that vector field initialized in the set remain in the set) if and only if $\langle f(w),v\rangle\leq0$ for every exterior normal vector $v$ at any boundary point $w$ of $\mathcal{C}$. To do that, first formally define for \eqref{eq:gen_dynamics}
    \begin{equation*}
        f(w) = f([z_1;z_2;x]):=\begin{bmatrix}
            \alpha_1-\alpha_2z_1z_2 \\ \alpha_3c^\top x-\alpha_4z_1z_2 \\ Ax+bz_1
        \end{bmatrix},
    \end{equation*}
    and let $\partial\mathcal{C}$ denote the boundary of $\mathcal{C}$. 
    Note that
    \begin{equation*}
        \partial\mathcal{C} = \{(z_1,z_2,x)\in\mathcal{C} : 
        \text{at least one coordinate of } (z_1,z_2,x) \text{ is zero}\},
    \end{equation*}
    since if all coordinates are strictly positive, then a small neighborhood 
    still lies entirely in $\mathcal{C}$, so the point is in the interior.

    Next, at any point $w\in\partial\mathcal{C}$ let $I(w):=\{i\in\{1,\dots,n+2\}~|~w_i=0\}$ and denote $I(w)=I$ if the point $w$ is clear by context. We will prove that the exterior normal cone of $\mathcal{C}$ at $w$ is given by $\{v\in\mathbb{R}^{n+2}~|~v=-\sum_{i\in I}\lambda_ie_i,~\lambda_i\ge0\}$. For any closed convex set (such as the positive orthant) the exterior normal cone can be defined as $\mathcal{N}_{\mathcal{C}}(w):=\{v\in\mathbb{R}^{n+2}~|~\langle v,\xi-w\rangle\leq 0~\text{for all }\xi\in\mathcal{C}\}$. Pick any $v\in\mathcal{N}_{\mathcal{C}}(w)$ and consider two cases: first, if $i\not\in I$, then $w_i>0$ and thus for sufficiently small $\varepsilon>0$, $w\pm\varepsilon e_i\in \mathcal{C}$. Using the definition of $\mathcal{N}_{\mathcal{C}}(w)$ with $\xi=w+\varepsilon e_i$ gives that $\langle v,\varepsilon e_i\rangle \leq 0$ and thus $v_i\leq 0$. Repeating the same logic for $\xi=w-\varepsilon e_i$ gives $v_i\geq 0$ and thus $v_i=0$; second, if $i\in I$ then for any sufficiently small $\varepsilon>0$, $w+\varepsilon e_i\in\mathcal{C}$, and again using the definition of the normal cone gives $v_i\leq 0$. This means $v_i=0$ if $i\not\in I$ and $v_i\leq 0$ if $i\in I$, thus $v=-\sum_{i\in I}\lambda_i e_i$, $\lambda_i:=-v_i\geq 0$, and thus $\mathcal{N}_{\mathcal{C}}(w)\subseteq\{v\in\mathbb{R}^{n+2}~|~v=-\sum_{i\in I}\lambda_ie_i,~\lambda_i\ge0\}$.

    To prove the converse pick any $v=-\sum_{i\in I}\lambda_ie_i$, with $\lambda_i\ge0$. Then for any $\xi\in \mathcal{C}$, $\langle v,\xi-w\rangle = -\sum_{i\in I}\lambda_i(\xi_i-w_i)=-\sum_{i\in I}\lambda_i\xi_i\leq 0$, proving that $\mathcal{N}_{\mathcal{C}}(w)\supseteq\{v\in\mathbb{R}^{n+2}~|~v=-\sum_{i\in I}\lambda_ie_i,~\lambda_i\ge0\}$ and proving equality.

    Finally we prove invariance of the positive orthant under \eqref{eq:gen_dynamics}. First, notice that any point $w=(z_1,z_2,x)$ satisfying $z_1=0$ is such that $\{f(w)\}_1 = \dot z_1|_{w} = \alpha_1-\alpha_2z_1z_2 = \alpha_1>0$, and thus $\langle f(w),-e_1\rangle = -\alpha_1<0 $. Next, notice that since $c\geq 0$ by assumption, at any point $w\in\partial \mathcal{C}$, $x\geq 0$ in particular, implying that $c^\top x\geq 0$. Thus, if $z_2=0$ at $w$, then $\langle f(w),-e_2\rangle = -\alpha_3y+\alpha_4z_1z_2 = -\alpha_3y\leq 0$. Finally, for any $i\in I$ assume $w$ is such that $x_{i-2}=0$ and compute $\langle f(w),-e_i \rangle = -(\sum_{j=1}^{n}A_{(i-2),j}x_j+b_{i-2}z_1)$. Since $b\geq 0$ and $z_1\geq0$ at $w$, $b_{i-2}z_1\geq0$. Furthermore, since $A$ is Metzler and $x_{i-2}=0$ then $\sum_{j=1}^{n}A_{(i-2),j}x_j\ge0$, implying that $\langle f(w),-e_i \rangle\leq 0$. We, thus, have proven that for all $w\in\partial \mathcal{C}$ and all $i\in I=\{i\in\{1,\dots, n+2\}~|~w_i=0\}$, $\langle f(w),-e_i\rangle \le0$. By conic combination, we conclude that for all $v\in\mathcal{N}_{\mathcal{C}}(w)$, $\langle f(w),v\rangle=-\sum_{i\in I}\lambda_i\langle f(w),e_i\rangle\leq 0$, concluding the proof.
\end{proof}

\begin{proof}[Proof of Corollary~\ref{cor:z1_overshoot_4d}]
    The proof is identical to Lemma~\ref{lem:z1_overshoot}.
\end{proof}

\begin{proof}[Proof of Corollary~\ref{cor:z1_bounded_4d}]
    The proof is identical to that of Theorem~\ref{thm:z1_bounded}, using 
    Corollary~\ref{cor:z1_overshoot_4d} in place of Lemma~\ref{lem:z1_overshoot}.
\end{proof}

\begin{proof}[Proof of Corollary~\ref{cor:x_bounded_4d}]
    Since $z_1(t)\le \overline{M}_{z_1}$ for all $t\ge 0$, this
    gives $\dot x_1\le\beta_1\overline{M}_{z_1}-\beta_2 x_1$. Let $w_1$ solve 
    $\dot w_1=\beta_1\overline{M}_{z_1}-\beta_2 w_1$ with $w_1(0)=x_1(0)$. Its explicit 
    solution is
    \begin{equation*}
        w_1(t)=\frac{\beta_1\overline{M}_{z_1}}{\beta_2}
        +\Bigl[x_1(0)-\frac{\beta_1\overline{M}_{z_1}}{\beta_2}\Bigr]e^{-\beta_2 t}.
    \end{equation*}
    By the comparison principle, $x_1(t)\le w_1(t)$ for all $t\ge 0$. Since 
    $w_1(t)$ remains between $w_1(0)$ and its equilibrium $\beta_1\overline{M}_{z_1}/\beta_2$,
    \begin{equation}\label{eq:x1_bound}
        x_1(t)\le \max\left\{x_1(0),\,\frac{\beta_1}{\beta_2}\overline{M}_{z_1}\right\}
        \eqqcolon \overline{M}_{x_1} \qquad\text{for all } t\ge 0.
    \end{equation}
    Next, using $x_1(t)\le \overline{M}_{x_1}$, the equation for $x_2$ gives 
    $\dot x_2\le\beta_3\overline{M}_{x_1}-\beta_4 x_2$. Let $w_2$ solve 
    $\dot w_2=\beta_3\overline{M}_{x_1}-\beta_4 w_2$ with $w_2(0)=x_2(0)$. Then
    \begin{equation*}
        w_2(t)=\frac{\beta_3\overline{M}_{x_1}}{\beta_4}
        +\Bigl[x_2(0)-\frac{\beta_3\overline{M}_{x_1}}{\beta_4}\Bigr]e^{-\beta_4 t}.
    \end{equation*}
    By the comparison principle, $x_2(t)\le w_2(t)$ for all $t\ge 0$. Hence
    \begin{equation}\label{eq:x2_bound}
        x_2(t)\le \max\left\{x_2(0),\,\frac{\beta_3}{\beta_4}\overline{M}_{x_1}\right\}
        \eqqcolon \overline{M}_{x_2} \qquad\text{for all } t\ge 0.
    \end{equation}
    Thus $x_1$ and $x_2$ are bounded on $[0,\infty)$.
\end{proof}

\begin{proof}[Proof of Corollary~\ref{cor:z2_bounded_4d}]
    The proof is identical to that of Lemma~\ref{lem:z2_bounded}, with $x$ 
    replaced by $[x_1,\,x_2]^\top$, $\nu^\top$ replaced by $[c,\,d]$, $K^*$ 
    replaced by $K_*:=\alpha_3 G/\alpha_4$ with $G=\beta_1\beta_3/(\beta_2\beta_4)$, 
    and $\gamma$ replaced by $\bar\gamma:=K_*+c\overline{M}_{x_1}+d\overline{M}_{x_2}$. 
    The key computation is that by the choice of $c$ and $d$, the $x_1$- and 
    $x_2$-terms cancel in $\dot W$, giving $\dot W=\alpha_4 z_1(K_*-z_2)$. From this, 
    one shows that $W$ cannot exceed $\bar\gamma$: whenever $W(t)>\bar\gamma$, 
    we have $z_2(t)>K_*$, which forces $\dot W(t)\le 0$, preventing further 
    growth. It then follows that $W(t)\le\max\{W(0),\bar\gamma\}$ for all 
    $t\ge 0$.
    Since $c,d\ge0$ and $x_1(t),x_2(t)\ge0$, we have $z_2(t)=W(t)-cx_1(t)-dx_2(t)\le W(t)$. Therefore
    \begin{equation*}
        z_2(t) \le W(t) \le \max\{W(0),\bar\gamma\}\eqqcolon \overline{M}_{z_2} 
        \qquad\text{for all }t\ge 0
    \end{equation*}
    completing the proof.
\end{proof}

\newpage

\bibliographystyle{ieeetr}
\bibliography{ref}

@article{Huang2018,
  author  = {Huang, H-H and Qian, Y and Del Vecchio, D},
  title   = {A quasi-integral controller for adaptation of genetic modules to variable ribosome demand},
  journal = {Nature Communications},
  volume  = {9},
  number  = {1},
  pages   = {5415},
  year    = {2018},
  doi     = {10.1038/s41467-018-07899-z},
  url     = {https://doi.org/10.1038/s41467-018-07899-z}
}

@book {mct,
    AUTHOR = {Sontag, E.D.},
 TITLE = {Mathematical {C}ontrol {T}heory. {D}eterministic {F}inite-{D}imensional {S}ystems},
    SERIES = {Texts in Applied Mathematics},
    VOLUME = {6},
   EDITION = {Second},
 PUBLISHER = {Springer-Verlag},
   ADDRESS = {New York},
      YEAR = {1998},
     PAGES = {xvi+531},
      ISBN = {0-387-98489-5},
      }

@INPROCEEDINGS{Margaliot-CDC,
  author={Margaliot, Michael and Wu, Chengshuai and Sontag, Eduardo D.},
  booktitle={2025 IEEE 64th Conference on Decision and Control (CDC)}, 
  title={Compact attractors of an antithetic integral feedback system have a simple structure}, 
  year={2025},
  volume={},
  number={},
  pages={2880-2885},
  doi={10.1109/CDC57313.2025.11312315}}

@article{rev_internal_model_2022,
   author = "Bin, Michelangelo and Huang, Jie and Isidori, Alberto and Marconi, Lorenzo and Mischiati, Matteo and Sontag, Eduardo",
   title = "Internal Models in Control, Bioengineering, and Neuroscience", 
   journal= "Annual Review of Control, Robotics, and Autonomous Systems",
   year = "2022",
   volume = "5",
   pages = "55-79",
  }

@article{Khammash2019,
author={Stephanie K. Aoki and  Gabriele Lillacci and  Ankit Gupta and  Armin Baumschlager and  David Schweingruber and  Mustafa Khammash},
title={A universal biomolecular integral feedback controller for robust perfect adaptation},
journal={Nature}, volume={570}, pages={533-537},
year={2019},
}

@article{agarwal2019naturecom,
  author = {D. K. Agrawal and R. Marshall and V. Noireaux and E. D. Sontag},
  title = {In vitro implementation of robust gene regulation in a synthetic biomolecular integral controller},
  journal={Nature Communications},
  year={2019},volume={10},pages={5760}, 
}

@misc{briat2025structuralstabilitypropertiesantithetic,
      title={Structural Stability Properties of Antithetic Integral (Rein) Control with Output Inhibition}, 
      author={Corentin Briat and Mustafa Khammash},
      year={2025},
      eprint={2201.13375},
      archivePrefix={arXiv},
      primaryClass={math.OC},
      url={https://arxiv.org/abs/2201.13375}, 
}

@article{AFC2016,
author={Briat, C. and  Gupta, A.  and  Khammash, M.}, title={Antithetic integral feedback ensures robust
perfect adaptation in noisy biomolecular networks},
journal={Cell Syst.},
volume={2}, pages={15-26}, year={2016},
}

@article{Eyal_k_posi,
	title={A Generalization of Linear Positive Systems with Applications to Nonlinear Systems: Invariant Sets and the {Poincar\'{e}-Bendixson} Property},
	author={E. Weiss and M. Margaliot},
	year={2021}, volume = {123},
    pages = {109358},
	journal={Automatica},  
}

@book{farina2000positive,
  author    = {Farina, Lorenzo and Rinaldi, Sergio},
  title     = {Positive Linear Systems: Theory and Applications},
  publisher = {Wiley-Interscience},
  year      = {2000},
  series    = {Pure and Applied Mathematics},
  address   = {New York},
  isbn      = {978-0-471-38456-5}
}

@article {monotoneTAC,
    AUTHOR = {Angeli, D. and Sontag, E.D.},
 TITLE = {Monotone control systems},
   JOURNAL = {IEEE Trans. Automat. Control},
    VOLUME = {48},
      YEAR = {2003},
    NUMBER = {10},
     PAGES = {1684--1698},
      ISSN = {0018-9286},
}

@article{2019biorxiv_margaliot_sontag,
	author = {Margaliot, M. and Sontag, E.D.},
	title = {Compact attractors of an antithetic integral feedback system have a simple structure (preprint version)},
	note = {DOI: 10.1101/868000},
	elocation-id = {868000},
	year = {2019},
	doi = {10.1101/868000},
	publisher = {Cold Spring Harbor Laboratory},
	URL = {https://www.biorxiv.org/content/early/2019/12/08/868000},
	eprint = {https://www.biorxiv.org/content/early/2019/12/08/868000.full.pdf},
	journal = {bioRxiv}
}

@article{katz2025instability,
author  = {Katz, Rami and Giordano, Giulia and Margaliot, Michael},
title   = {Instability of equilibrium and convergence to periodic orbits in strongly 2-cooperative systems},
journal = {Journal of Differential Equations},
volume  = {444},
pages   = {113651},
year    = {2025},
doi     = {10.1016/j.jde.2025.113651},
url     = {https://doi.org/10.1016/j.jde.2025.113651}
}

@article{olsman2019antithetic,
title = {Antithetic integral feedback for the robust control of monostable and oscillatory biomolecular circuits},
journal = {IFAC-PapersOnLine},
volume = {53},
number = {2},
pages = {16826-16833},
year = {2020},
note = {21st IFAC World Congress},
author = {Noah Olsman and Fulvio Forni},
}

\end{document}